\definecolor{comcolor}{rgb}{0.9,0.3,0.3}
\definecolor{starcolor}{rgb}{0.3,0.3,0.9}
\definecolor{hscolor}{rgb}{0.9,0.6,0.5}
\newtheorem{thm}{Theorem}[section]
\newtheorem{lemma}[thm]{Lemma}
\newtheorem{corollary}[thm]{Corollary}
\newtheorem{prop}[thm]{Proposition}
\theoremstyle{definition}
\newtheorem{example}[thm]{Example}
\newtheorem{remark}[thm]{Remark}
\newcommand{\be}[1]{\begin{equation}\label{#1}}
\newcommand{\ee}{\end{equation}}
\newcommand{\ba}{\begin{array}}
\newcommand{\ea}{\end{array}}
\newcommand{\bal}{\begin{aligned}}
\newcommand{\eal}{\end{aligned}}
\newcommand{\ie}{i.e.\@\xspace}
\newcommand{\R}{\mathbb{R}}
\newcommand{\N}{\mathbb{N}}
\newcommand{\E}{\mathbb{E}}
\newcommand{\p}{\mathbb{P}}
\newcommand{\calF}{\mathcal{F}}
\newcommand{\calG}{\mathcal{G}}
\newcommand{\calZ}{\mathcal{Z}}
\newcommand{\Var}{\mathrm{Var}}
\newcommand{\1}{1\hspace{-0.098cm}\mathrm{l}}
\newcommand{\la}{\lambda}
\newcommand{\Om}{\Omega}
\newcommand{\invis}[1]{}
\newcommand{\ra}{\rightarrow}
\newcommand{\ssup}[1] {{\scriptscriptstyle{({#1}})}}
\newcommand{\cD}{\mathcal{D}}
\newcommand{\cF}{\mathcal{F}}
\newcommand{\cG}{\mathcal{G}}
\newcommand{\cZ}{\mathcal{Z}}
\newcommand{\IP}{\mathbb{P}}
\newcommand{\II}{\mathbb{I}}
\newcommand{\IF}{\mathbb{F}}
\newcommand{\IN}{\mathbb{N}}
\newcommand{\IZ}{\mathbb{Z}}
\newcommand{\IE}{\mathbb{E}}
\newcommand{\iN}{\in\IN}
\newcommand{\imp}{\mathrm{imp}}
\newcommand{\ind}{\1}
\newcommand{\dd}{\mathrm{d}}                                % d for differentials
\newcommand{\nor}{\theta} % name for normalisation used to be Q
\begin{document}

\vglue20pt 

\begin{center}
 {\huge \bf Robust analysis of preferential\\[.2cm] attachment models with fitness}
\end{center}

\bigskip
\bigskip

% \centerline{by}
% \bigskip
% \medskip

\centerline{{\Large{\sc Steffen Dereich$^{1}$} and {\sc Marcel Ortgiese$^2$}}}
\bigskip

\begin{center}\it
\parbox[c][3cm][t]{0.5\textwidth}{
\begin{center}
$^1$Institut f\"ur Mathematische Statistik\\
Westf.\ Wilhelms-Universit\"at M\"unster\\
Einsteinstra\ss{}e 62\\
48149 M\"unster\\
Germany \\[2mm]
\end{center}
}%
\hfill\parbox[c][3cm][t]{0.5\textwidth}{
\begin{center}
$^2$Institut f\"ur Mathematik\\
Technische Universit\"at Berlin\\
Str.\ des 17.\ Juni 136\\
10623 Berlin\\
Germany\\
\end{center}
}

\vspace{1cm}

{\rm 14 February 2013}\\
\end{center}

\bigskip
\bigskip
% \bigskip

{\leftskip=1truecm
\rightskip=1truecm
\baselineskip=15pt
\small

\noindent{\slshape\bfseries Abstract.} The preferential attachment network with fitness is a dynamic random graph model. New vertices are introduced consecutively and a
new vertex is attached to an old vertex with probability proportional to the degree of the old one multiplied by a random fitness. We concentrate on the typical behaviour of the graph by calculating the fitness distribution of a vertex chosen proportional to its degree. For a particular variant of the model, this analysis was first carried out by Borgs, Chayes, Daskalakis and Roch. However, we present a new method, which is robust in the sense that it does not depend on the exact specification of the attachment law. In particular, we show that a peculiar phenomenon, referred to as Bose-Einstein condensation, can be observed in a wide variety of models.
Finally, we also compute the joint degree and fitness distribution of a uniformly chosen
vertex.
\bigskip

\noindent{\slshape\bfseries Keywords.} Barab\'asi-Albert model, 
% Erd\H{o}s-R\'enyi model, 
power law, scale-free network, nonlinear preferential attachment, nonlinearity, dynamic random graph, condensation.
% emergence, giant component,cluster, 
%multitype branching random walk.
\bigskip

\noindent
{\slshape\bfseries 2010 Mathematics Subject Classification.} Primary 05C80 Secondary 60G42, 90B15 % 1.) random graphs 2. ) network models, stochastic + martingales with discrete parameter -> could use 60C05 -> combinatorial probability

}
%%%%%% End of narrower

%\newpage
%}
\newpage

\section{Introduction}

Preferential attachment  models were popularized by~\cite{BarabasiAlbert_1999} 
as a possible model for complex networks such as the world-wide-web. 
The authors observed that a simple mechanism can explain the occurrence of power law degree distributions in real world networks. Often networks are the result of a continuous dynamic process:  new members enter social networks or new web pages are created and linked to popular old ones. In this process new vertices prefer to establish links to old vertices that are well connected. 
Mathematically, one considers a sequence of random graphs (\emph{random dynamic network}), where new vertices are introduced consecutively and then connected to each old vertex 
with a probability proportional to the degree of the old vertex. This rather simple mechanism leads to networks with power law degree distributions and thus offers
an explanation for their occurrence, see e.g.~\cite{BRST} for a mathematical account.

There are many variations of the classic model to address different
shortcomings, see e.g.~\cite{RemcoNotes} for an overview.
For example, a more careful analysis of the classical model
 shows that one can observe a 
``first to market''-advantage, where 
from a certain point onwards the vertex with maximal degree will
always remain maximal, see e.g.~\cite{DereichMoertes09}. 
Clearly, this is not the only possible scenario observed in real networks.
One possible improvement is to model the fact that 
vertices have an intrinsic quality or fitness, 
which would allow even younger vertices to overtake
old vertices in popularity.

Introducing fitness has a significant effect on the network formation. In particular, it may provoke condensation effects as indicated in~\cite{BB_2001}. 
 A first mathematically rigorous analysis  was carried out in~\cite{BCDR07} for the following variant of the model:
 First every (potential) vertex $i\in\IN$ is assigned  an independent identically distributed (say $\mu$-distributed) fitness~$\cF_i$. Starting with the network $\cG_1$ consisting of the single vertex $1$
with a self-loop, the network is formed as follows.
Suppose we have constructed the graph $\calG_n$ with vertices~$\{1,\dots,n\}$, 
then we obtain $\cG_{n+1}$ by 
\begin{itemize}
\item insertion of the vertex $n+1$ and 
\item insertion of a single edge linking up the new vertex to the old vertex $i\in\{1,\dots,n\}$ with probability proportional to
\begin{align}\label{eq:1710-1}
\cF_i\,\deg_{\cG_n}(i),
\end{align}
\end{itemize}
where $\deg_\cG(i)$ denotes the degree of vertex $i$ in a graph $\calG$.

In~\cite{BCDR07}, the authors
compute the asymptotic
fitness distribution of a vertex chosen proportional to its degree. This limit distribution is either absolutely continuous with respect to $\mu$ (``\emph{fit-get-richer phase}'') or has a singular component that puts mass on the essential supremum of $\mu$ (``\emph{condensation phase}'' or ``\emph{Bose-Einstein phase}'').  In the condensation phase a positive fraction of mass is shifted towards the essential
supremum of~$\mu$. 

The analysis in~\cite{BCDR07} uses a coupling argument with a 
generalized urn model, which was investigated by~\cite{Janson_2004}
using in turn a coupling with a multitype branching process.
A more direct approach was presented in~\cite{Bhamidi}, who
explicitly couples the random graph model with a multitype branching
process and then uses classical results, see e.g.~\cite{JagersNerman_1996}, to complete
the analysis. Both results rely very much on the particularities of the model specification. This is in strong contrast to the physicists' intuition which suggests that explicit details of the model specification do not have an impact. 

The aim of the article is to close or at least reduce this gap significantly.
We present a new approach to calculating fitness distributions, 
which is robust in the sense that it does not rely on the exact details of
the attachment rule. 
In particular, we show that the
condensation phenomenon can be observed in a wide range of variations
of the model.

What makes the preferential attachment model with fitness more difficult
to analyse than classic preferential attachment models is that the 
normalisation, obtained by summing the weights
in~(\ref{eq:1710-1}) over all vertices $i$,  is neither deterministic nor
is it linear in the degrees.

In the framework of the classical preferential attachment model, 
there are several approaches to specify
the model fairly robustly. 
A rather general approach to calculate degree distributions
 in the case of a constant normalisation is
presented in~\cite{HagbergWiuf_2006}, where only a (linear) recursion for
the degree sequence is assumed. However,
the approach is restricted to a deterministic out-degree.
For a linear model, the requirement of a deterministic normalisation
can be relaxed.
For example in~\cite{CooperFrieze_2003}, apart from 
more complicated update rules, the out-degree
of a new vertex is also allowed to be random (albeit of bounded degree). 
Similarly, in~\cite{Jordan_2006} it is only assumed that the out-degree distribution
has exponential moments. However, in these cases even though the normalisation
is random, it is rather well concentrated around its mean. A particular interesting
variant is when the out-degree is heavy-tailed as analysed in~\cite{Deijfenetal_2009}.
Here, the fluctuations of the normalisation around its mean start interfering
and alter the degree distributions significantly.

For non-linear preferential attachment models, a particular elegant way of dealing with a random normalisation
 is to establish a coupling
with a branching process, which implicitly takes care of the problem,
see for example the survey~\cite{Bhamidi}. 
This also includes models with sublinear preferential attachment rules,
see e.g.~\cite{RTV_07}.
A generalisation of the model with fitness is presented in~\cite{jordan_geometric_2012}, 
where the attractiveness of a vertex is a function of a random location 
in some metric space. However, in that setting the full analysis is only carried out when
the metric space is finite, which corresponds to only finitely many different values
for the random fitness in our model. 

Our approach shows a new way of dealing with the 
normalisation constant using a bootstrapping argument.
The idea is to start with a bound $\nor$ on 
the normalisation, from which we deduce a new bound $T(\nor)$. 
Then, by a continuity argument, we deduce that the correct limit
of the normalisation is a fixed point of $T$. We stress that the mapping $T$ is new and has not appeared in the physics literature on complex networks with fitness yet.

In particular, our proofs show that the condensation effect can be observed irrespectively
of the fine details of the model.
The phenomenon of Bose-Einstein condensation seems to have a
universal character, for an overview of further models  see~\cite{DM_emer_2012}.
The precise analysis of the dynamics in a closely related model are carried
out in~\cite{Dereich_13}.

\section{Definitions and main results}

We consider a dynamic graph model with fitness. Each vertex $i\in\IN$ is assigned  an independent $\mu$-distributed fitness~$\cF_i$, where $\mu$ is a compactly supported distribution  on the Borel sets of $(0,\infty)$ that is not a Dirac-distribution. We call $\mu$ the \emph{fitness distribution}.

We measure the importance of a vertex $i$ in a directed graph $\cG$ by its \emph{impact}
$$\imp_{\cG}(i):= 1+\text{ indegree of $i$ in $\cG$}.$$ 
For technical reasons, we set $\imp_{\cG}(i)=0$, if $i$ is not a vertex of $\cG$.

The complex network is represented by a sequence $(\cG_n)_{n\iN}$ of random directed multigraphs without loops that is built according to the following rules. Each graph $\cG_n$ consists of $n$ vertices labeled by $1,\dots,n$. The first graph consists of the single vertex~$1$ and no edges. Further, 
given $\cG_n$, the network $\cG_{n+1}$ is formed by  carrying out the following two steps:
\begin{itemize}\item Insertion of the vertex $n+1$.
\item Insertion of directed edges $n+1\to i$ for each old vertex $i\in\{1,\dots,n\}$ with intensity proportional to
\begin{align}\label{connect}
\cF_i \cdot \imp _{\cG_n} (i).
\end{align}
  \end{itemize}

Note that this is not a unique description of the network formation. We still need to clarify the explicit rule how new vertices connect to  old ones. We will do this in terms of the \emph{impact evolutions}: for each $i\in\IN$, we consider the process $\cZ(i)=(\cZ_n(i))_{n\in\IN}$ defined by
$$
\cZ_n(i):=\imp_{\cG_n}(i).
$$
Since all edges point from younger to older vertices and since in each step all new edges attach to the new vertex, the sequence  $(\cG_n)_{n\in\IN}$ can be recovered from the impact evolutions $(\cZ(i):i\in\IN)$. Indeed, for any $i,j,n\in\IN$ with $i<j\leq n$ there are exactly
$$
\Delta \cZ_{j-1}(i):= \cZ_j(i)-\cZ_{j-1}(i)
$$
links pointing from $j$ to $i$ in $\cG_n$.
Note that each impact evolution $\cZ(i)$ is monotonically increasing, $\N_0$-valued and satisfies $\cZ_n(i)=\ind_{\{n=i\}}$ for $n\leq i$, and any choice for the impact evolutions with these three properties describes uniquely a dynamic graph model.
We state the assumptions in terms of the impact evolutions. For a discussion of relevant examples, we refer the reader to the discussion below.

{\bf Assumptions.} 
Let $\la > 0$ be a parameter and define 
$$
\bar\cF_n=\frac 1 {\lambda n} \sum_{j=1}^n \cF_j\,\imp_{\cG_n}(j)=\frac1{\la n}\langle \cF, \cZ_n\rangle,
$$
where $\cZ_n:=(\cZ_n(i))_{i\in\IN}$.

We assume that the following three conditions are satisfied:

\begin{tabularx}{\textwidth}{ p{.8cm} >{\raggedright\arraybackslash}X}
 {\bf (A1)}
& \[ \E [ \Delta \calZ_{n}(i)  | \calG_n] = \frac {\calF_i \,\cZ_n(i)}{n \bar \calF_n} .\] \\
 {\bf (A2)} & There exists a constant $C^{\rm var}$ such that 
\[ \Var ( \Delta \calZ_{n}(i)  | \calG_n ) \leq C^{\rm var} \E [ \Delta \calZ_{n}(i)  | \calG_n] .  \]
  \\
 {\bf (A3)} & Conditionally on $\calG_n$, for $i \neq j$, we assume that
$\Delta \calZ_{n}(i)$ and $\Delta \calZ_{n}(j)$ are negatively correlated. 
\end{tabularx}

By assumption the essential supremum of $\mu$ is finite and strictly positive, say $s$. Since the model will still satisfy assumptions (A1) - (A3), if we replace $\cF_i$ by $\cF_i'=\cF_i/s$, we can and will assume without loss of generality that 

\begin{tabularx}{\textwidth}{ p{.8cm} >{\raggedright\arraybackslash}X}
{\bf (A0)} &   \vspace{-5mm}\[ \mathrm{ess\,sup}(\mu)=1 . \]
\end{tabularx}

\begin{remark}\label{re:outdeg} Assumptions (A1)-(A3) guarantee that 
 the total number of edges in the system is of order $\la n$, see Lemma~\ref{le:outdeg}.
\end{remark}

Let us give two examples that satisfy our assumptions.

\begin{example}\label{ex:Pois} \emph{Poisson outdegree (M1).}
The definition depends on a parameter $\lambda >0$. In model (M1), given $\cG_n$, the new vertex $n+1$ establishes for each old vertex $i\in\{1,\dots,n\}$  an independent  Poisson-distributed number of links $n+1\rightarrow i$ with parameter
$$
\frac {\cF_i\, \cZ_{n}(i)}{n\, \bar\cF_n}.
$$
Note that the conditional outdegree of a new vertex $n+1$, given~$\cG_n$, is Poisson-distributed  with parameter~$\lambda$.
\end{example}

\begin{example}\label{ex:multi} \emph{Fixed outdegree (M2).}
The definition relies on a parameter $\lambda\iN$ denoting the deterministic outdegree of new vertices. Given $\cG_n$, the number of edges connecting $n+1$ to the individual old vertices $1,\dots,n$ forms a multinomial random variable with parameters $\lambda$ and 
$$
\Bigl( \frac {\cF_i\, \calZ_{n}(i)}{\lambda n\, \bar\cF_n}\Bigr)_{i=1,\dots,n}, \text{ where }\bar\cF_n=\frac 1 {\lambda n} \sum_{i=1}^n \cF_i\,\calZ_n(i).
$$ 
The model (M2) with  $\lambda=1$ is the one  analysed in \cite{BCDR07}.\medskip
\end{example}

We analyse a sequence of random measures $(\Gamma_n)_{n\iN}$ on $[0,1]$ given by 
$$
\Gamma_n= \frac 1n \sum_{i=1}^n \cZ_n(i)\, \delta_{\cF_i}
$$
the \emph{impact distributions}.
These measures describe the relative impact of fitnesses. Note also that, up to normalisation, 
$\Gamma_n$ is the distribution of the fitness of a vertex chosen proportional 
to its impact.

\begin{thm}\label{thm:main} Suppose that Assumptions (A0)-(A3) are satisfied. If $\int \frac{f}{1-f}\,\mu(df)\geq \lambda$, we denote by $\nor^*\geq 1$ the unique value with
$$
\int \frac f{\nor^*-f}\,\mu(df)=\lambda
$$
and set otherwise $\nor^*=1$. One has
$$
\lim_{n\to\infty} \bar\cF_n=\nor^*, \text{ almost surely}
$$
and we distinguish two regimes:
\begin{itemize}
\item[(i)] {\bf Fit-get-richer phase.} Suppose that  $\int \frac{f}{1-f}\,\mu(df)\geq \lambda$.  $(\Gamma_n)$ converges, almost surely, in the weak$^*$ topology to $\Gamma$, where
$$
\Gamma(df)= \frac{\nor^*}{\nor^*-f} \,\mu(df) 
$$
 \item[(ii)] {\bf Bose-Einstein phase.} Suppose that  $\int \frac{f}{1-f}\,\mu(df)< \lambda$.  $(\Gamma_n)$ converges, almost surely, in the weak$^*$ topology to $\Gamma$, where 
$$ \Gamma(df) = \frac{1}{1- f}\, \mu(df) + \Big(1 + \la - \int_{[0,1)} \frac{1}{1-f} \mu(df)\Big) \, \delta_1.
$$
\end{itemize}
\end{thm}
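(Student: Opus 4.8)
The plan is to analyze the evolution of the impact distributions $(\Gamma_n)$ together with the normalisation sequence $\bar\cF_n$ via a bootstrapping scheme around the map
$$
T(\nor) = \text{(new bound on the normalisation deduced from an old bound $\nor$)},
$$
as advertised in the introduction. Concretely, I would first fix a test function — or better, the family of functions $f\mapsto f/(\nor-f)$ for $\nor>1$ — and study the stochastic recursion satisfied by $\langle \cF,\cZ_n\rangle$ and by pairings $\langle g,\Gamma_n\rangle$. Using (A1) one computes $\E[\langle g,\Gamma_{n+1}\rangle\,|\,\cG_n]$ exactly: each vertex $i$ contributes an expected increment $\cF_i \cZ_n(i)/(n\bar\cF_n)$, so that
$$
\E\big[\langle g,\Gamma_{n+1}\rangle \,\big|\, \cG_n\big] = \Big(1-\tfrac1{n+1}\Big)\langle g,\Gamma_n\rangle + \tfrac1{n+1}\Big(\big\langle \tfrac{f}{\bar\cF_n}\,g(f),\Gamma_n\big\rangle + g(\cF_{n+1})\Big),
$$
up to the $\tfrac1{n+1}$ bookkeeping for the new vertex having impact $1$. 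This identifies the "drift operator" whose fixed points are exactly the candidate limits $\Gamma$ in the theorem: a measure $\Gamma$ is stationary precisely when $\langle g,\Gamma\rangle = \langle \tfrac{f}{\nor}g,\Gamma\rangle + \langle g,\mu\rangle$ with $\nor=\langle f,\Gamma\rangle/\lambda$, i.e. $\Gamma(df) = \tfrac{\nor}{\nor-f}\mu(df)$ plus possibly an atom at $1$, and the consistency equation $\lambda\nor = \langle f,\Gamma\rangle$ forces $\int \tfrac{f}{\nor-f}\mu(df)=\lambda$ — which is solvable with $\nor^*>1$ exactly in the fit-get-richer regime, and otherwise degenerates to $\nor^*=1$ with the overflow mass piling up at the essential supremum, producing the Dirac part.

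The execution would go in stages. \textbf{Step 1: a priori control of $\bar\cF_n$.} Using Lemma~\ref{le:outdeg} (total edge count $\asymp\lambda n$) together with (A0), one gets $1\le \liminf \bar\cF_n \le \limsup\bar\cF_n \le 1+\lambda + o(1)$ almost surely, giving a compact range for the normalisation. \textbf{Step 2: concentration.} Assumptions (A2) and (A3) are precisely what is needed to show that the martingale part of the recursion for $\langle g,\Gamma_n\rangle$ is $L^2$-bounded after the standard $\prod(1-1/k)$ normalisation: (A2) bounds the per-step conditional variance by the conditional mean, which sums to $O(\log n)$, while (A3) ensures no positive cross terms blow up when $g$ is spread over many vertices. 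A Doob/Azuma-type argument then yields almost-sure convergence of $\langle g,\Gamma_n\rangle - (\text{deterministic recursion driven by }\bar\cF_n)\to 0$. \textbf{Step 3: bootstrapping.} Plug the a priori bounds $\norLow \le \bar\cF_n \le \norUp$ into the recursion; solving the (linear, in $\langle g,\cdot\rangle$) fixed-point equation with $\nor$ frozen at $\norLow$ resp.\ $\norUp$ gives improved bounds $\norLow' = S(\norLow)$, $\norUp' = S(\norUp)$ sandwiching $\liminf\bar\cF_n$, $\limsup\bar\cF_n$; here $S$ is the contraction-like map built from $\nor\mapsto \tfrac1\lambda\int \tfrac{f}{\nor\vee 1 - f}\,\mu(df)$ appropriately truncated. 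Iterating and using monotonicity/continuity of $S$ forces convergence to the unique fixed point $\nor^*$. \textbf{Step 4: identifying $\Gamma$.} Once $\bar\cF_n\to\nor^*$ a.s., the deterministic recursion for $\langle g,\Gamma_n\rangle$ has the explicit stable solution $\langle g,\Gamma\rangle$ with $\Gamma$ as in (i); in the Bose–Einstein regime one additionally checks that the total mass $\Gamma_n([0,1]) \to 1+\lambda$ (again Lemma~\ref{le:outdeg}) while the absolutely continuous part only accounts for $\langle 1/(1-f),\mu\rangle$, so the remaining mass must sit at $1$ by a tightness-at-the-boundary argument — concretely, for any $\eps$, $\Gamma_n([1-\eps,1])$ is asymptotically at least the deficit, because high-fitness vertices grow fastest.

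The main obstacle I anticipate is \textbf{Step 3 combined with the boundary behaviour near $f=1$}: the map $S$ involves $\int \tfrac{f}{\nor-f}\mu(df)$, which is only finite for $\nor>1$ (or $\nor=1$ with the extra integrability $\int\tfrac{f}{1-f}\mu(df)<\infty$ — not assumed), so the naive fixed-point iteration can push $\nor$ toward $1$ where the map is singular and loses its contraction property exactly at the phase boundary. Handling this requires a careful truncation: work with $f\wedge(1-\delta)$, run the bootstrap to get $\nor^*_\delta$, then let $\delta\downarrow 0$ and use monotone convergence, while separately tracking the mass that the truncation hides near $1$ and showing it converges to the claimed Dirac weight. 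The negative-correlation hypothesis (A3) also needs to be used with some care in Step 2, since $\langle g,\Gamma_n\rangle$ is a sum over all vertices and a crude union bound would lose the $\log n$ control; the point is that $\Var(\sum_i g(\cF_i)\Delta\cZ_n(i)\,|\,\cG_n) \le \sum_i g(\cF_i)^2 \Var(\Delta\cZ_n(i)\,|\,\cG_n) \le \|g\|_\infty^2 C^{\rm var}\sum_i \E[\Delta\cZ_n(i)\,|\,\cG_n]$, and the last sum is the expected out-degree, which is $O(1)$ by (A1) and Lemma~\ref{le:outdeg}.
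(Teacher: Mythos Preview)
Your high-level architecture---martingale concentration via (A2)--(A3), then a bootstrap on $\bar\cF_n$ toward a fixed point---is exactly the paper's. Two concrete gaps, however, separate your sketch from a proof.

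First, your Step~1 claim $\liminf_n \bar\cF_n \ge 1$ is not an a~priori bound. The trivial estimate (Lemma~\ref{le:triv_bound}) gives only $\liminf_n \bar\cF_n \ge \int x\,\mu(dx)$, which is strictly below~$1$ since $\mu\neq\delta_1$. Pushing the lower bound up to~$1$ in the Bose--Einstein phase is part of the content, and the paper handles it by a separate bootstrap from below (Lemma~\ref{lemma:from_measure_to_normalisation}(ii)), using for $\nor<1$ the improvement $\nor\mapsto \nor+\tfrac{\nor}{\lambda}\bigl(1-\mu[0,\nor)\bigr)$.

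Second---and this is the main point---the singularity you flag as the ``main obstacle'' is an artefact of choosing the wrong map. You take $S$ built from $\nor\mapsto \tfrac{1}{\lambda}\int \tfrac{f}{\nor-f}\,\mu(df)$ and propose to tame the blow-up at $\nor\downarrow 1$ by truncating fitnesses at $1-\delta$ and sending $\delta\downarrow 0$. The paper avoids this entirely by using the total-mass constraint \emph{inside the bootstrap}, not merely in Step~4. Concretely: a bound $\limsup_n\bar\cF_n\le\nor$ yields via Proposition~\ref{prop:lower_emp} a lower bound $\liminf_n\Gamma_n((0,t])\ge\nu((0,t])$ with $\nu(df)=\tfrac{\nor}{\nor-f}\mu(df)$; since $\Gamma_n([0,1])\to 1+\lambda$, this immediately gives the complementary upper bound $\limsup_n\Gamma_n((t,1])\le 1+\lambda-\nu((0,t])$. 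Integrating $f$ against the resulting measure $\nu' = \nu + (1+\lambda-\nu[0,1))\,\delta_1$ produces
\[
T(\nor) \;=\; 1 + \frac{1}{\lambda}\int \frac{(\nor-1)\,f}{\nor-f}\,\mu(df),
\]
whose integrand is bounded by $f\le 1$ for every $\nor\ge 1$. Thus $T$ is finite and smooth on $[1,\infty)$, satisfies $T(1)=1$, and has $T'(\nor)<1$ for $\nor>\nor^*$, so a one-line mean-value argument gives the contraction. The overflow mass you want to track separately via truncation is already sitting at $\delta_1$ inside $T$; no $\delta\downarrow 0$ limit is needed, and the Dirac part of $\Gamma$ in the Bose--Einstein phase drops out of the same computation.
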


\begin{remark}
In particular, the two phases can be characterized as follows.
In the  \emph{Fit-get-richer phase}, \ie  if $\int\frac {f}{1-f}\,\mu(dx)\geq \lambda$, then the limit of $(\Gamma_n)$  is absolutely continuous with respect to $\mu$.
However, in the \emph{Bose-Einstein-phase}, \ie if $\int\frac {f}{1-f}\,\mu(dx)<\lambda$, then the limit of  $(\Gamma_n)$ is not absolutely continuous with respect to $\mu$, but has an atom in~$1$. The explanation for this phenomenon is that a positive fraction
of newly incoming edges connects to vertices with fitness that is closer 
and closer to the essential supremum of the fitness distribution $\mu$, 
which in the limit amounts to an atom at the essential supremum.
\end{remark}

Next, we restrict attention to vertices with a fixed impact $k\in\N$. For $n\in\N$ we consider the random measure 
\[ \Gamma^\ssup{k}_n := \frac 1n \sum_{i=1}^n \ind_{\{\cZ_n(i)=k\}} \delta_{\cF_i} , \]
representing -- up to normalisation -- the random fitness of a uniformly chosen vertex with impact $k$. 

To prove convergence of $(\Gamma_n^\ssup {k})$, we need additional assumptions. Indeed, so far our assumptions admit models for which vertices are always connected by multiple edges in which case there would be no vertices with impact $2$.

We will work with the following assumptions:\smallskip

\begin{tabularx}{\textwidth}{ p{.8cm} >{\raggedright\arraybackslash}X}
{\bf (A4)} & $\forall k\in\N$: $\sup_{i=1,\dots,n} \ind_{\{Z_n(i)= k\}}\, n \,\IP(\Delta \cZ_n(i)\geq 2|\cG_n)\to0$,  \   a.s.  \\[.3cm]
{\bf (A4')}  & $\forall k\in\N$: $\sup_{i=1,\dots,n} \ind_{\{Z_n(i)= k\}}\, n \bigl|\,\IP(\Delta \cZ_n(i)=1|\cG_n)- \frac {\cF_i Z_n(i)}{n\,\bar\cF_n}\bigr|\to 0$, \ a.s.\\[.35cm]
 \end{tabularx}
Further we impose an additional assumption on the correlation structure:\smallskip

\begin{tabularx}{\textwidth}{ p{.8cm} >{\raggedright\arraybackslash}X}
{\bf (A5)} & Given $\calG_n$, the collection $\{ \Delta \calZ_n(i) \}_{i = 1}^n$
is negatively quadrant dependent in the sense that for any $i \neq j$,
and any $k,l \in \N$
\[ \p \{ \Delta \calZ_n(i) \leq k ; \Delta \calZ_n(j) \leq \ell | \calG_n\}
 \leq \p \{ \Delta \calZ_n(i) \leq k | \calG_n \} \p \{ \Delta \calZ_n(j) \leq \ell | \calG_n\}
\]
\end{tabularx}

\begin{remark}
Note that both Examples~\ref{ex:Pois} and~\ref{ex:multi} also satisfy these additional 
assumptions. 
Moreover, under Assumption (A1), Assumptions (A4) and  (A4') are equivalent to\smallskip

\begin{tabularx}{\textwidth}{ p{.85cm} >{\raggedright\arraybackslash}X}
{\bf (A4'')} &  $\forall k\in\N$:  $\sup_{i=1,\dots,n} \ind_{\{Z_n(i)= k\}}\, n \,
\E[ \Delta Z_n(i) \1_{\{ \Delta Z_n(i) \geq 2 \}} | \calG_n] \to0$, \ a.s.\\[.4cm]
\end{tabularx}
\end{remark}

\begin{thm}\label{thm:2} Suppose that Assumptions (A0), (A4), (A4') and (A5) are satisfied and that for some $\theta^*\in[1,\infty)$
$$
\lim_{n\to\infty} \bar\cF_n=\theta^*, \ \text{ almost surely.}
$$
Then one has that, almost surely, 
$(\Gamma_n^\ssup{k})$ converges in the weak$^*$ topology to $\Gamma^\ssup{k}$, where
\begin{equation}\label{eq:2811-1}   \Gamma^\ssup{k} (df) = \frac{1}{k+ \frac{\theta^*}{f}}  \frac{\theta^*}{f} \prod_{i=1}^{k-1} \frac{i}{i+ \frac{\theta^*}{f}}
 \mu(df) \end{equation}
\end{thm}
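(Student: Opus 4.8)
\textbf{Proof proposal for Theorem~\ref{thm:2}.}

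The plan is to derive a closed recursion for the expected mass that the measures $\Gamma_n^\ssup{k}$ put on a fixed interval $[a,b]\subseteq[0,1]$, and then bootstrap from first moments to almost-sure convergence via a martingale/concentration argument, in the spirit of the proof of Theorem~\ref{thm:main}. Write $M_n^\ssup{k}(a,b):=n\,\Gamma_n^\ssup{k}([a,b])=\#\{i\le n: \cZ_n(i)=k,\ \cF_i\in[a,b]\}$. The key observation is that a vertex contributes to $M_{n+1}^\ssup{k}$ in essentially three ways: it already had impact $k$ at time $n$ and received no new edge; it had impact $k-1$ at time $n$ and received exactly one new edge; or (for $k=1$) it is the newly inserted vertex $n+1$, whose fitness is an independent $\mu$-sample. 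Under (A4) and (A4') the probability of receiving a new edge, given $\cG_n$, is $\tfrac{\cF_i\cZ_n(i)}{n\bar\cF_n}+o(1/n)$ uniformly over vertices of impact $k$, and the probability of a double jump is $o(1/n)$; moreover $\bar\cF_n\to\theta^*$ a.s.\ by hypothesis and the a.s.\ bounds from Lemma~\ref{le:outdeg} (Remark~\ref{re:outdeg}) keep everything integrable. This yields, for $k\ge2$,
$$
\E\bigl[M_{n+1}^\ssup{k}(a,b)\,\big|\,\cG_n\bigr]
= M_n^\ssup{k}(a,b)\Bigl(1-\tfrac{k}{n}\int_{[a,b]}\tfrac{f}{\bar\cF_n}\,\widehat\mu_n(df)\Bigr)
+ \tfrac1n\int_{[a,b]}\tfrac{(k-1)f}{\bar\cF_n}\,(\cdots)\ +\ o(1/n)\,\text{-terms},
$$
where the weights come from the impact-$(k-1)$ contribution; for $k=1$ there is an additional $+\mu([a,b])$ term from the new vertex. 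Replacing $\bar\cF_n$ by $\theta^*$ and passing to the deterministic limit $\gamma_k(a,b):=\lim_n \E[\Gamma_n^\ssup{k}([a,b])]$ turns this into the linear system
$$
k\,\theta^*\!\!\int_{[a,b]}\tfrac1f\,\gamma_k(df) + \gamma_k(a,b)
= (k-1)\theta^*\!\!\int_{[a,b]}\tfrac1f\,\gamma_{k-1}(df) \quad(k\ge2),\qquad
\theta^*\!\!\int_{[a,b]}\tfrac1f\,\gamma_1(df)+\gamma_1(a,b)=\mu([a,b]),
$$
whose unique solution, solved inductively in $k$ by differentiating in $b$, is exactly the density in~(\ref{eq:2811-1}): $\gamma_1(df)=\tfrac{1}{1+\theta^*/f}\tfrac{\theta^*}{f}\mu(df)$ after dividing through, and each further step multiplies by the factor $\tfrac{k-1}{(k-1)+\theta^*/f}$ while the ``no new edge'' term produces the prefactor $\tfrac{1}{k+\theta^*/f}\cdot\tfrac{\theta^*}{f}$ — one checks this matches the stated product formula.

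The upgrade from convergence of expectations to almost-sure weak$^*$ convergence follows the same template as in Theorem~\ref{thm:main}. Fix a continuous test function $\varphi$ on $[0,1]$ (it suffices to treat $\varphi(f)=f^m$ or indicators of a convergence-determining countable family), and consider $X_n:=\langle\Gamma_n^\ssup{k},\varphi\rangle$. The conditional-expectation recursion above shows that $X_n$ is, up to a predictable drift converging to the deterministic limit and up to the vanishing (A4)/(A4') error terms, an \emph{almost-martingale}: its increments have conditional mean $\tfrac1n(g(X_n)+o(1))$ for an affine $g$ with attracting fixed point $\langle\gamma_k,\varphi\rangle$. For the variance of the increments one uses (A5): negative quadrant dependence gives that the conditional variance of $M_{n+1}^\ssup{k}(a,b)$ is bounded by the sum of individual conditional variances, each $O(1/n)$ by (A4)/(A4') together with (A2)-type bounds, so $\Var(X_{n+1}-X_n\mid\cG_n)=O(1/n^2)$. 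A standard stochastic-approximation / Robbins–Monro argument (or: subtract the drift to get a true martingale with summable increment variances, apply the martingale convergence theorem, and identify the limit via the ODE $\dot x=g(x)$) then gives $X_n\to\langle\gamma_k,\varphi\rangle$ a.s. Running this for all $\varphi$ in a countable convergence-determining class yields a.s.\ weak$^*$ convergence of $\Gamma_n^\ssup{k}$ to the measure with density~(\ref{eq:2811-1}).

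The main obstacle I expect is the induction on $k$ in the presence of the random normalisation: to control $M_n^\ssup{k}$ one needs $M_n^\ssup{k-1}$ to be already well-behaved (concentrated near its deterministic limit), so the proof must proceed by a genuine induction, propagating both the mean asymptotics and the concentration estimates from level $k-1$ to level $k$. Along the way one must show that the $o(1/n)$ errors from (A4), (A4'), and from replacing $\bar\cF_n$ by $\theta^*$ are summable (or at least Cesàro-negligible) after multiplication by $n$ and division by $n$ — this is where the uniform-in-$i$ formulation of (A4)/(A4') and the a.s.\ linear growth of the edge count from Lemma~\ref{le:outdeg} are essential, and where one has to be slightly careful because the number of impact-$k$ vertices is itself $\Theta(n)$ so the errors are not trivially negligible without the uniform bounds. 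A secondary, more technical point is verifying that the limiting measures $\gamma_k$ have no atoms away from $f=1$ (so that testing against indicators of intervals is legitimate) and handling the possible atom at $1$; since the product $\prod_{i=1}^{k-1}\tfrac{i}{i+\theta^*/f}$ stays bounded and $\tfrac{1}{k+\theta^*/f}\le 1$, the density is $\le\mu$-integrable and in fact $\gamma_k$ is absolutely continuous with respect to $\mu$, so no separate atom analysis is needed — but this should be remarked on explicitly.
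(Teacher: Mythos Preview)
Your overall strategy---induction on $k$, a stochastic-approximation recursion for $X_n=\Gamma_n^\ssup{k}((f,f'])$, drift from (A4)/(A4'), martingale convergence from (A5)---is exactly the paper's. Two points, however, are genuine gaps rather than mere compression.

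\textbf{The recursion is not closed in $X_n$.} You assert that the conditional increment has mean $\tfrac1n(g(X_n)+o(1))$ for an \emph{affine} $g$. It does not: the loss term is
\[
\sum_{i\in\II_n}\1_{\{\cZ_n(i)=k\}}\,\IP(\Delta\cZ_n(i)\ge1\mid\cG_n)
\;\approx\;\frac{k}{\bar\cF_n}\cdot\frac1n\sum_{i\in\II_n}\1_{\{\cZ_n(i)=k\}}\cF_i
\;=\;\frac{k}{\bar\cF_n}\int_{(f,f']}x\,\Gamma_n^\ssup{k}(dx),
\]
which is \emph{not} a function of $X_n=\Gamma_n^\ssup{k}((f,f'])$ alone. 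The paper resolves this by bounding $\cF_i\in(f,f']$ above by $f'$ (respectively below by $f$) to obtain one-sided inequalities
\[
X_{n+1}-X_n\;\gtrless\;\frac1{n+1}\bigl(A_n-B_n X_n\bigr)+\Delta R_n,
\]
with $B_n\to1+\tfrac{kf'}{\theta^*}$ (resp.\ $1+\tfrac{kf}{\theta^*}$), applies Lemma~\ref{le:stApp} to each side, and then Riemann-approximates over fine partitions so that the gap between the two bounds vanishes. Your write-up needs this step; without it the ``affine $g$'' claim is false.

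\textbf{The displayed limit system is wrong.} Your equation $k\theta^*\!\int\tfrac1f\gamma_k+\gamma_k=(k-1)\theta^*\!\int\tfrac1f\gamma_{k-1}$ has the ratio inverted: the jump rate is $\cF_i k/(n\bar\cF_n)$, i.e.\ proportional to $f/\theta^*$, not $\theta^*/f$. For $k=1$ your system gives density $f/(f+\theta^*)$, whereas the correct density (and the one in \eqref{eq:2811-1}) is $\theta^*/(f+\theta^*)$. The product formula does come out right, but only from the correct balance equation $\bigl(1+\tfrac{kf}{\theta^*}\bigr)g_k(f)=\tfrac{(k-1)f}{\theta^*}g_{k-1}(f)$.

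Two smaller points: Theorem~\ref{thm:2} assumes only (A0), (A4), (A4'), (A5) and $\bar\cF_n\to\theta^*$, so you cannot invoke Lemma~\ref{le:outdeg} or ``(A2)-type bounds''; the paper bounds the martingale increments directly via (A4)/(A4') and the identity $\sum_i\cF_i\cZ_n(i)/(n\bar\cF_n)=\lambda$. And the martingale remainder is handled in the paper by the decomposition $R_n=M_n^\ssup{1}-M_n^\ssup{2}$ into ``cross-up-through-$k$'' and ``cross-above-$k$'' indicators, each with summable conditional second moment via (A5); this is the clean way to get $\Var(\Delta R_n\mid\cG_n)=O(n^{-2})$ without (A2).
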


% \begin{remark}\label{re:rep}
% Note in particular, that if $(Z_t)_{t \geq 0}$ is the jump process that jumps up one step
% with rate $k$ if in state $k$ (started in $Z_0 = 1$), then we can also express
% \[ \Gamma^\ssup{k} (df) = \int_0^\infty e^{-s}\,  \p \{ Z_{fs/\theta} = k \} \, ds \,\mu(df) . \]
% {\tt Sollen wir die Rechnung einfuehren?}
% \end{remark}

The theorem immediately allows to control the number of vertices with impact $k\in\N$. Let 
$$
p_n(k):=\frac 1n \sum_{i=1}^n \ind_{\{\cZ_n(i)=k\}} = \Gamma_n^\ssup{k}([0,1]).
$$

\begin{corollary}Under the assumptions of Theorem~\ref{thm:2}, one has that 
\[ \lim_{n\to\infty} p_n(k) = \int_{(0,1]} \frac{1}{k+ \frac{\theta^*}{f}}  \frac{\theta^*}{f} \prod_{i=1}^{k-1} \frac{i}{i+ \frac{\theta^*}{f}}
 \mu(df), \ \text{ almost surely.}\]	
\end{corollary}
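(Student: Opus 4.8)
The plan is to deduce the corollary directly from Theorem~\ref{thm:2} by testing the weak$^*$ convergence $\Gamma_n^\ssup{k}\to\Gamma^\ssup{k}$ against the constant function $\1$. Since $p_n(k)=\Gamma_n^\ssup{k}([0,1])$ is the total mass of $\Gamma_n^\ssup{k}$ and the claimed limit is $\Gamma^\ssup{k}([0,1])=\int_{(0,1]}\frac{1}{k+\theta^*/f}\,\frac{\theta^*}{f}\prod_{i=1}^{k-1}\frac{i}{i+\theta^*/f}\,\mu(df)$, the statement would follow immediately if the constant function were a legitimate test function. The subtlety is that $\Gamma_n^\ssup{k}$ and $\Gamma^\ssup{k}$ are measures on $[0,1]$ while the model lives on $(0,1]$ (recall $\mu$ is supported on $(0,\infty)$ with $\mathrm{ess\,sup}(\mu)=1$, so $\mu(\{0\})=0$), and weak$^*$ convergence of measures only directly controls integrals of \emph{continuous bounded} functions; evaluating total mass is the same as integrating $\1$, which is continuous and bounded on the compact space $[0,1]$, so this is in fact unproblematic.

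Concretely, first I would note that each $\Gamma_n^\ssup{k}$ is a (sub-probability) measure on the compact metric space $[0,1]$, with total mass $p_n(k)\le 1$, and that the limiting measure $\Gamma^\ssup{k}$ is likewise a finite measure on $[0,1]$ with total mass at most $1$; indeed $\Gamma^\ssup{k}([0,1])=\Gamma^\ssup{k}((0,1])$ because the density $\frac{1}{k+\theta^*/f}\,\frac{\theta^*}{f}\prod_{i=1}^{k-1}\frac{i}{i+\theta^*/f}$ is integrated against $\mu$, which charges no mass at $0$. Then, on the event of full probability on which $\Gamma_n^\ssup{k}\weakconv\Gamma^\ssup{k}$ in the weak$^*$ topology (this event exists by Theorem~\ref{thm:2}), apply the convergence to the test function $g\equiv1$, which is continuous and bounded on $[0,1]$: this gives $\int_{[0,1]}1\,\diff\Gamma_n^\ssup{k}\to\int_{[0,1]}1\,\diff\Gamma^\ssup{k}$, i.e. $p_n(k)\to\Gamma^\ssup{k}([0,1])$. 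Rewriting the right-hand side as the stated integral over $(0,1]$ completes the argument.

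There is essentially no obstacle here: the only thing to be slightly careful about is the distinction between weak$^*$ convergence on $[0,1]$ and genuine weak convergence of probability measures (total mass need not be conserved a priori), but since we are integrating the constant $\1$ — which is itself a valid continuous test function on the compact space $[0,1]$ — mass is automatically passed to the limit, so no tightness or uniform-integrability argument is needed. One could alternatively phrase the proof via Portmanteau (the set $[0,1]$ is both open and closed, hence a continuity set for any measure), but invoking the test function $\1$ directly is cleanest. Thus the corollary is an immediate specialisation of Theorem~\ref{thm:2}, and the displayed identity $p_n(k)=\Gamma_n^\ssup{k}([0,1])$ stated just before the corollary is exactly the bridge that makes this work.
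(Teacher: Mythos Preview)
Your proposal is correct and matches the paper's intent: the paper does not give an explicit proof of the corollary, presenting it as an immediate consequence of Theorem~\ref{thm:2} via the identity $p_n(k)=\Gamma_n^\ssup{k}([0,1])$, which is exactly what you do by testing the weak$^*$ convergence against the constant function~$\1$.
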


\subsubsection*{Outline of the article}

 Section~\ref{se:prelim} starts with preliminary considerations. In particular, it introduces a stochastic approximation argument which among other applications also appeared in the context  of generalized
urn models, see e.g.\ the survey~\cite{PemantleSurvey}. In preferential
attachment models, these techniques only seem to have been used directly
in~\cite{jordan_geometric_2012}.
Roughly speaking, key quantities are expressed as approximations to stochastically perturbed differential equations. The perturbation is asymptotically negligible and one obtains descriptions by differential equations that are typically referred to as \emph{master equations}.

Section~\ref{se:bootstrap} is concerned with the proof of Theorem~\ref{thm:main}. Here the main task is to prove convergence of the random normalisation $(\bar\cF_n)$. This goal is achieved via a bootstrapping argument. Starting with an upper bound on $(\bar \cF_n)$ of the form 
$$
\limsup_{n\to\infty} \bar\cF_n\leq \theta, \ \text{ almost surely},
$$
we show that this statement remains true when replacing $\theta$ by
\begin{equation}\label{eq:T} T(\nor)= 1+ \frac 1\lambda \int  \frac {\nor-1}{\nor-f}\,f \,\mu(df).\end{equation}
Iterating the argument yields convergence to a fixed point. 
The mapping $T$ always has the fixed point $\nor = 1$. 
Moreover, it has
a second fixed point $\nor^* > 1$ if and only if $\int \frac{x}{\nor-x}\mu(dx) >\la$, 
which corresponds to the fit-get-richer phase.
In this case, one can check that only the larger fixed point $\nor^*$ is stable.
%{\tt here we use $\nor^*>1$ and not $=1$}
However, in the condensation phase, $T$ has only a single
fixed point, which is also stable.
See also Figure~\ref{fig:T} for an illustration.

\begin{figure}[htbp]
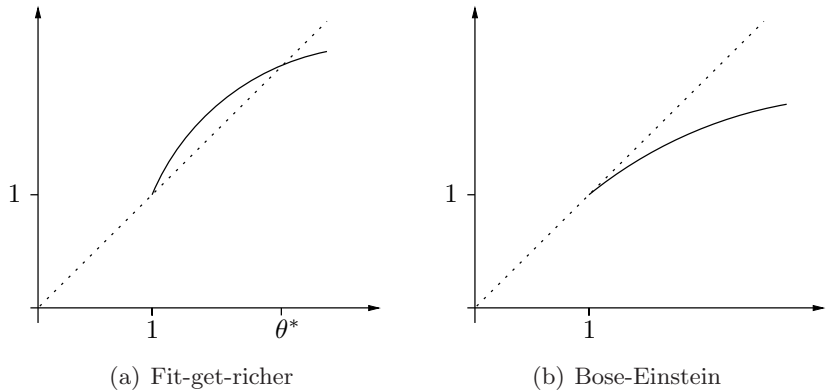

\centering 
\subfigure[Fit-get-richer]{ 	
		\input{FGR_plot.pspdftex}\hspace{2mm}
% 		\label{fig:nonBE}  
		}
\subfigure[Bose-Einstein]{\hspace{2mm} 	
		\input{BE_plot.pspdftex}
% 		\label{fig:BE} 
		}
\caption{The figure on the left shows the schematic graph 
of  $T$ in the case that $\nor^* > 1$, 
while the one on the right is for the case $\nor^* = 1$.}\label{fig:T}
\end{figure}

Section~\ref{se:deg_dist} is concerned with the proof of Theorem~\ref{thm:2}.
The proof is based on stochastic approximation techniques introduced in Section~\ref{se:prelim}.   In our setting these differential 
equations are non-linear because of the normalisation $\bar \calF_n$. 
However, since we can control the normalisation by Theorem~\ref{thm:main}, 
in the analysis of the joint fitness and degree distribution, we 
arrive at linear equations (or more precisely inequalities) for
the stochastic approximation. The latter then yield Theorem~\ref{thm:2}
via an approximation argument.

\section{Preliminaries}\label{se:prelim}

We first recall
the general idea of stochastic approximation, which goes back to~\cite{RobbinsMonro_1951}
and can be stated for example for a stochastic process 
${\bf X}_n$ taking values in $\R^d$. Then, ${\bf X}_n$ is known
as a stochastic approximation process, if it satisfies
a recursion of the type
\begin{equation}\label{eq:2401-1} {\bf X}_{n+1} - {\bf X}_n = \frac{1}{n+1} F({\bf X}_n) + {\bf R}_{n+1} - {\bf R}_n,\end{equation}
where $F$ is a suitable vector field and the increment of ${\bf R}$ corresponds
to an (often stochastic) error.
In our setting, we could for example restrict to the case when 
$\mu$ is supported on finitely many values $\{ f_1, \ldots, f_d\}\subset (0,1]$ and denote
by 
\[ X_n(k) = \frac{1}{n} \sum_{i=1}^n \calZ_n(i) \1_{\{ \calF_i = f_k \}} , \] 
the proportion of vertices that have fitness $f_k$ weighted by their impact.
Then, one can easily calculate the conditional expectation of $X_{n+1}(k)$
given the graph $\calG_n$ up to time $n$.
Indeed, as we will see in the proof of Proposition~\ref{prop:lower_emp}, under our assumptions we obtain that
\[ \E[ X_{n+1}(k) - X_n(k)\, |\,\calG_n ] 
 = \frac{1}{n+1} \Big( \mu(\{ f_k\}) + \frac{f_k}{\bar \calF_n}  X_n(k)- X_n(k)  \Big) 
\]
Therefore, we note that ${\bf X_n} = (X_n(k))_{k=1}^d$ 
satisfies
\[ X_{n+1}(k) - X_n(k)
 = \frac{1}{n+1} \Big( \mu(\{ f_k\}) + \frac{f_k}{\bar \calF_n}  X_n(k)- X_n(k) \Big)
+ R_{n+1}(k) - R_n(k) , 
\]	
so that ${\bf X}_n = (X_n(k))_{k=1}^d$ 
satisfies an equation of type~\ref{eq:2401-1}, provided
we take $R_{n+1}(k) - R_n(k) = X_{n+1}(k) - \E[ X_{n+1}(k) | \calG_n]$, 
which defines a martingale, for which we can employ the standard techniques to show
convergence. 

Provided that the random perturbations are asymptotically negligible, it is possible to analyse  the random dynamical system by the corresponding master equation
$$
\dot{\bf x}_t =F({\bf x}_t).
$$
There are many articles exploiting such connections and an overview is provided by~\cite{BenaimSurvey}. The connection to general urn models
is further explained in~\cite{PemantleSurvey}. 
In  random graphs, the resulting differential equation is closely related
to what is known as the master equation in heuristic derivations, see e.g.~\cite[Ch. 14]{newman_networks_2010}.

However, in our setting, this method is not directly applicable.
First of all, we would like to consider arbitrary fitness distributions
(i.e.\ not restricted to finitely many values)
and secondly the resulting equation is not linear, because of the appearance
of the normalization $\bar \calF_n$.
The latter problem is addressed by using a bootstrapping method (as described
in the introduction). However, this leads to an inequality
on the increment, rather than an equality as in~(\ref{eq:2401-1}).
Fortunately, the resulting vector field $F$ has a very simple structure and so
we can deduce the long-term behaviour of $\mathbf{X}_n$ 
by elementary means, the corresponding technical result 
is Lemma~\ref{le:stApp}.
By using inequalities, we also gain the flexibility to approximate arbitrary fitness 
distribution by discretization.

In order to keep our proofs self-contained, we will first state and prove 
an easy special case of the technique adapted to our setting.

\begin{lemma}\label{le:stApp}
 Let $(X_n)_{n \geq 0}$ be a non-negative stochastic process. We suppose that 
the following estimate holds
\begin{equation}\label{eq:2911-1} X_{n+1} - X_n  \leq \frac{1}{n+1} (A_n - B_n X_n) + R_{n+1}-R_{n}, \end{equation}
where
\begin{enumerate}
\item $(A_n)$ and $(B_n)$ are almost surely convergent stochastic processes with deterministic  limits $A,B>0$,
\item $(R_n)$ is an almost surely convergent stochastic process.
\end{enumerate}
Then one has that, almost surely,
\[ \limsup_{n \ra\infty} X_n \leq \frac{A}{B} . \]
Similarly, if instead under the same conditions (i) and (ii)
\[ X_{n+1} - X_n  \geq \frac{1}{n+1} (A_n - B_n X_n) + R_{n+1} -R_n, \]
then almost surely
\[ \liminf_{n \ra \infty} X_n \geq \frac{A}{B}. \]
\end{lemma}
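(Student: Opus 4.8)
\textbf{Proof proposal for Lemma~\ref{le:stApp}.}

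The plan is to reduce the recursive inequality~\eqref{eq:2911-1} to a deterministic one by absorbing the martingale-type error $(R_n)$ into the state variable. Set $Y_n := X_n - R_n$. Then~\eqref{eq:2911-1} becomes $Y_{n+1} - Y_n \leq \frac{1}{n+1}(A_n - B_n X_n)$, and since $X_n = Y_n + R_n$ with $R_n$ convergent, we may rewrite the right-hand side as $\frac{1}{n+1}(A_n - B_n Y_n - B_n R_n) = \frac{1}{n+1}(\tilde A_n - B_n Y_n)$ where $\tilde A_n := A_n - B_n R_n$ still converges almost surely to $A$ (the limit is deterministic because $B_n R_n \to B \cdot \lim R_n$, and we can fold $\lim R_n$ away at the end, or simply note we only need $\limsup X_n = \limsup Y_n$ since $R_n$ converges). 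So it suffices to prove the statement for a process $Y_n$ satisfying $Y_{n+1} - Y_n \leq \frac{1}{n+1}(\tilde A_n - B_n Y_n)$ with no error term, and then conclude $\limsup X_n = \limsup Y_n \leq A/B$.

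Next I would argue pathwise. Fix $\omega$ in the almost-sure event where $\tilde A_n \to A$ and $B_n \to B > 0$, and fix $\eps > 0$. Choose $N$ so large that for $n \geq N$ one has $\tilde A_n \leq A + \eps$ and $B_n \geq B - \eps$ (assume $\eps < B$). Then for $n \geq N$,
\[
Y_{n+1} \leq Y_n + \frac{1}{n+1}\bigl((A+\eps) - (B-\eps) Y_n\bigr)
= \Bigl(1 - \frac{B-\eps}{n+1}\Bigr) Y_n + \frac{A+\eps}{n+1},
\]
\emph{provided} $Y_n \geq 0$ so that the coefficient manipulation goes the right way; here I need the hypothesis that $(X_n)$, hence essentially $(Y_n)$ up to the bounded perturbation $R_n$, stays non-negative — more carefully, I would work directly with $X_n$ in this comparison step rather than $Y_n$, keeping the $R$-increments and handling them via summation by parts, or alternatively observe that $Y_n \geq X_n - \sup_m|R_m| \geq -\sup_m |R_m|$ is bounded below, which is enough to control the sign issues with an extra additive constant that washes out. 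The cleanest route is probably to prove an elementary deterministic comparison lemma: if $u_{n+1} \leq (1 - \frac{b}{n+1})u_n + \frac{a}{n+1}$ for all $n \geq N$ with $0 < b \leq 1$ and $u_n$ bounded below, then $\limsup u_n \leq a/b$. This is standard and follows by comparing with the solution of the corresponding recursion with equality, which converges to $a/b$ (e.g. because $\prod_{k=N}^{n}(1 - \frac{b}{k+1}) \sim c n^{-b} \to 0$ and a discrete Duhamel/variation-of-constants computation gives the limit $a/b$).

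Applying this with $a = A+\eps$, $b = B \wedge 1 - \eps$ (shrinking $b$ further if $B > 1$, which only weakens the bound in the right direction), I get $\limsup_n X_n \leq \frac{A+\eps}{(B\wedge 1) - \eps}$ on the almost-sure event, and letting $\eps \downarrow 0$ yields $\limsup X_n \leq \frac{A}{B \wedge 1}$. To recover the sharp bound $A/B$ when $B > 1$, I would instead run the comparison along blocks or simply note that the deterministic comparison lemma holds for any $b > 0$ once $n$ is large enough that $\frac{b}{n+1} < 1$, so no truncation of $b$ is actually needed — one only needs $N$ large relative to $b$. The $\liminf$ statement is entirely symmetric: reverse the inequality in~\eqref{eq:2911-1}, bound $\tilde A_n \geq A - \eps$, $B_n \leq B + \eps$, and use the matching lower comparison lemma, with the non-negativity of $X_n$ again ensuring the sign of the $B_n Y_n$ term behaves. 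I expect the main obstacle to be purely bookkeeping: cleanly decoupling the convergent error $(R_n)$ from the state while preserving the sign conditions needed for the one-sided comparison, and making the elementary recursion lemma precise enough (the asymptotics of $\prod(1 - \frac{b}{k+1})$ and the associated sum) without it becoming a page of computation — but none of this is conceptually hard.
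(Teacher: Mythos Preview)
Your approach is correct and would go through once the bookkeeping is filled in, but it takes a genuinely different route from the paper. You absorb $R_n$ into the state variable and then appeal to a discrete Gronwall/Duhamel comparison, iterating $u_{n+1}\le(1-\tfrac{b}{n+1})u_n+\tfrac{a}{n+1}$ via the product $\prod(1-\tfrac{b}{k+1})\sim c\,n^{-b}$. The paper instead keeps $X_n$ and $R_n$ separate and runs a short drift argument: for $m,n\ge n_0$ it sums the recursion to get
\[
X_m-X_n\le\sum_{j=n}^{m-1}\frac{1}{j+1}\bigl((1+\delta)A-(1-\delta)BX_j\bigr)+\delta,
\]
observes that each summand is at most $-B(1-\delta)\delta/(j+1)$ whenever $X_j>C+\delta$ with $C=\tfrac{(1+\delta)A}{(1-\delta)B}$, and uses only the divergence of the harmonic series to force $X$ below $C+\delta$ at some finite time; a one-step bound then traps it below $C+3\delta$ thereafter. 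The paper's route is more elementary---no product asymptotics, just $\sum 1/j=\infty$---and a bit shorter; yours is the systematic ODE-comparison viewpoint and makes the origin of the limit $A/B$ more explicit. One small slip in your write-up: $\tilde A_n=A_n-B_nR_n\to A-BR_\infty$, not $A$, and correspondingly $\limsup X_n=\limsup Y_n+R_\infty$ rather than $\limsup Y_n$; the two corrections cancel, so the conclusion is unaffected. Your shift-by-a-constant idea does handle the sign issue when replacing $B_n$ by $B\pm\eps$.
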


\begin{proof} This is a slight adaptation of Lemma~2.6 in~\cite{PemantleSurvey}.
Fix $\delta \in (0,1)$. By our assumptions, almost surely, we can find $n_0$ such that for 
all $m,n \geq n_0$, 
\[ A_n \leq (1+\delta)A, \quad B_n \geq (1-\delta)B, \quad |R_m - R_n| \leq \delta .
\]
Then, by~(\ref{eq:2911-1}), we have that for any $m > n \geq n_0$,
\begin{equation}\label{eq:2911-2} \bal  X_m - X_n & \leq \sum_{j=n}^{m-1} \frac{1}{j+1} ( A_j - B_j X_j) 
 + |R_m-R_n| \\
& \leq \sum_{j = n}^{m-1} \underbrace{\frac{1}{j+1}((1+\delta)A - (1-\delta)B X_j))}_{=:Y_j} + \delta . 
\eal \end{equation}
Let $C = \frac{(1+\delta)A}{(1-\delta)B}$.
For each index $j\geq n_0$ with $X_j\geq C+\delta$, one has that $$Y_j\leq -B (1-\delta) \delta /(j+1).$$ Since the harmonic series diverges, by~(\ref{eq:2911-1}) there exists $m_0\geq n_0$ with $X_{m_0}\leq  C+\delta$. \smallskip

Next, we prove that for any $m\geq m_0$ one has $X_m\leq C+3\delta$ provided that $n_0$ is chosen sufficiently large (i.e. $\frac{1}{n_0 + 1}(1+\delta) A \leq \delta$). Suppose that $X_m> C+\delta$. We choose~$m_1$ as largest index smaller than $m$ with 
$X_{m_1}\leq C+\delta$. Clearly, $m_1\geq m_0$ and an application of estimate~(\ref{eq:2911-2}) gives
$$
X_m\leq  X_{m_1}+  Y_{m_1}+ \delta  \leq C+2\delta +\frac {1}{m+1} (1+\delta)A \leq C+3\delta = \frac {(1+\delta) A}{(1-\delta)B}+3\delta.
$$
Since $\delta\in(0,1)$ is arbitrary, we get that, almost surely,
$$
\limsup_{n\to\infty} X_n\leq \frac AB.
$$
The argument for the reverse inequality works analogously.
\end{proof}

As a first application of Lemma~\ref{le:stApp}, we can 
show that the total number of edges converges if properly
normalized.

\begin{lemma}\label{le:outdeg}
Almost surely, we have that
\[ \lim_{n \ra \infty} \frac{1}{n}\sum_{i =1}^n \cZ_n(i) = 1 + \la  .\] 
\end{lemma}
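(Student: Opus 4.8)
The plan is to apply Lemma~\ref{le:stApp} to the process $X_n := \frac1n\sum_{i=1}^n \cZ_n(i) = \Gamma_n([0,1])$, which records the total impact (equivalently, one plus the total number of edges) normalised by $n$. First I would compute the conditional increment $\E[(n+1)X_{n+1} - nX_n \mid \cG_n]$. Writing $(n+1)X_{n+1} = nX_n + 1 + \sum_{i=1}^n \Delta\cZ_n(i)$ — the $+1$ accounting for the new vertex $n+1$, whose impact is $1$ — and using Assumption (A1), this equals $nX_n + 1 + \sum_{i=1}^n \frac{\cF_i\cZ_n(i)}{n\bar\cF_n} = nX_n + 1 + \frac1{\bar\cF_n}\langle\cF,\cZ_n\rangle$. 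By definition $\langle\cF,\cZ_n\rangle = \la n\,\bar\cF_n$, so the last term collapses to $\la n$, giving $\E[(n+1)X_{n+1} \mid \cG_n] = nX_n + 1 + \la n$. Rearranging, $\E[X_{n+1} - X_n\mid\cG_n] = \frac1{n+1}\big((1+\la)(n+1) - X_n(n+1) + \text{(lower order)}\big)/(n+1)$; more carefully, one gets an identity of the form $X_{n+1} - X_n = \frac1{n+1}(A_n - B_n X_n) + (R_{n+1}-R_n)$ with $A_n \to 1+\la$, $B_n \to 1$, after a small algebraic massage to absorb the $\frac{n}{n+1}$ factors into $A_n,B_n$ (both of which then converge to deterministic positive limits).

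Next I would check condition (ii) of Lemma~\ref{le:stApp}, namely that $R_n := \sum_{j<n}\big(X_{j+1} - \E[X_{j+1}\mid\cG_j]\big)$ converges almost surely. This is a martingale, so it suffices to bound $\sum_n \E[(X_{n+1}-\E[X_{n+1}\mid\cG_n])^2\mid\cG_n] < \infty$ a.s. Now $(n+1)(X_{n+1} - \E[X_{n+1}\mid\cG_n]) = \sum_{i=1}^n\big(\Delta\cZ_n(i) - \E[\Delta\cZ_n(i)\mid\cG_n]\big)$, and using Assumption (A3) (negative correlation across distinct $i$) together with Assumption (A2) (conditional variance dominated by conditional mean up to the constant $C^{\rm var}$), the conditional variance of this sum is at most $C^{\rm var}\sum_{i=1}^n\E[\Delta\cZ_n(i)\mid\cG_n] = C^{\rm var}\cdot\la n$ by the computation above. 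Hence $\E[(X_{n+1}-\E[X_{n+1}\mid\cG_n])^2\mid\cG_n] \le C^{\rm var}\la n/(n+1)^2 = O(1/n)$, which is \emph{not} summable — so a cruder bound alone fails and one must be a little more careful. I expect this to be the main obstacle. The standard fix is to note that $X_n$ itself grows at most linearly so the relevant fluctuation bound can be sharpened, or — more simply — to run the argument not on $X_n$ directly but on the total-impact martingale after an additional truncation, or to invoke the $L^2$-convergence of stochastic-approximation martingales as in the cited Lemma~2.6 of~\cite{PemantleSurvey} where exactly this $O(1/n)$-variance situation is handled (the point being that the $1/(n+1)$ prefactor in front of the increment means the relevant series is $\sum 1/(n+1)^2 \cdot n < \infty$ once one keeps track of it correctly). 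Concretely, one writes $R_{n+1}-R_n = X_{n+1}-\E[X_{n+1}\mid\cG_n]$ and observes $\E[(R_{n+1}-R_n)^2\mid\cG_n] = O(1/n^2)$ after all — because the fluctuation of $X_{n+1}$, not of $(n+1)X_{n+1}$, carries the extra factor $1/(n+1)^2$ relative to the variance bound $O(n)$ just derived, yielding $O(1/n)$; summable-ness then genuinely requires the martingale convergence theorem in its $L^2$/quadratic-variation form, which applies since $\sum_n \E[(R_{n+1}-R_n)^2] < \infty$ fails but $\sum_n \E[(R_{n+1}-R_n)^2\mid\cG_n]$ is dominated using that $X_n = O(n^{o(1)})$ is false — so in fact the clean route is: bound $X_n \le $ (a.s. finite random constant) fails too, hence one applies Lemma~\ref{le:stApp}'s hypothesis directly by first establishing $\limsup X_n < \infty$ via a preliminary crude estimate, then re-runs with the sharpened variance bound.

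Finally, once both $\limsup_{n} X_n \le \frac{A}{B} = 1+\la$ and $\liminf_n X_n \ge 1+\la$ are obtained from the two halves of Lemma~\ref{le:stApp}, the convergence $\frac1n\sum_{i=1}^n\cZ_n(i) \to 1+\la$ almost surely follows. I would present the argument by first deriving the exact increment identity (the clean part), then verifying (i) of the lemma is immediate, and concentrating the write-up on the martingale-convergence verification of (ii), since that is where the only real work lies; the rest is bookkeeping. The total edge count is $\sum_i \cZ_n(i) - n = n(X_n - 1)$, so this also records that the number of edges is asymptotically $\la n$, matching Remark~\ref{re:outdeg}.
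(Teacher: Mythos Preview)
Your approach is identical to the paper's, but a computational error creates a phantom obstacle. When you evaluate $\sum_{i=1}^n \frac{\cF_i\cZ_n(i)}{n\bar\cF_n}$, you drop the factor $\frac{1}{n}$: this sum equals $\frac{1}{n\bar\cF_n}\langle\cF,\cZ_n\rangle = \frac{1}{n\bar\cF_n}\cdot\lambda n\,\bar\cF_n = \lambda$, not $\lambda n$. With this correction the conditional-increment identity reads exactly
\[
\E[X_{n+1}-X_n\mid\cG_n] = \frac{1}{n+1}\bigl((1+\lambda) - X_n\bigr),
\]
so $A_n\equiv 1+\lambda$ and $B_n\equiv 1$ with no algebraic massage needed.

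The same error reappears in your variance estimate. The conditional variance of $\sum_{i=1}^n\bigl(\Delta\cZ_n(i)-\E[\Delta\cZ_n(i)\mid\cG_n]\bigr)$ is bounded, via (A3) and then (A2), by $C^{\rm var}\sum_{i=1}^n\E[\Delta\cZ_n(i)\mid\cG_n] = C^{\rm var}\lambda$, not $C^{\rm var}\lambda n$. Consequently $\E[(\Delta R_n)^2\mid\cG_n] \le (C^{\rm var}\lambda+1)/(n+1)^2$, which is summable. The ``main obstacle'' you describe does not exist; martingale convergence is immediate once the arithmetic is fixed, and the entire second half of your plan (truncations, preliminary crude bounds, appeals to Pemantle's survey for $O(1/n)$-variance martingales, a bootstrap on $\limsup X_n$, etc.) is unnecessary. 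The paper's proof is precisely your outline with the arithmetic done correctly.
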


\begin{proof} 
Define $Y_n = \frac{1}{n} \sum_{i=1}^n \cZ_n(i)$. 
Then, we calculate the conditional expectation of $Y_{n+1}$ given $\calG_n$
using that $\calZ_{n+1}(n+1) = 1$ by definition as 
\[ \bal \E[ Y_{n+1} | \calG_n] & = 
\frac{1}{n+1} \big( \sum_{i=1}^n \E[ \calZ_{n+1}(i) | \calG_n ]  + 1\big) \\
& = Y_n  + \frac{1}{n+1} \big(1+ \sum_{i=1}^n\E[ \Delta Z_n(i) | \calG_n] - Y_n \big) \\
% & = Y_n  + \frac{1}{n+1} (1+  \frac{1}{n} \sum_{i=1}^n \frac{\calF_i \calZ_n(i)}{\bar\calF_n}
%  - Y_n ) \\
& = Y_n  + \frac{1}{n+1} (1+  \la
 - Y_n ) ,\\
\eal \]
where we used assumption (A1) on the conditional mean of $\Delta Z_n(i)$
and the definition of $\bar \calF_n$.
Thus, we can write
\begin{equation}\label{eq:2401-3} Y_{n+1} - Y_n = \frac{1}{n+1} ( 1 + \la - Y_n) + R_{n+1} - R_n  , \end{equation}
where we define $R_0 = 0$ and 
\[ \Delta R_n := R_{n+1} - R_n = Y_{n+1} - \E[ Y_{n+1} | \calG_n] . \]
Therefore, $R_n$ is a martingale and $R_n$ converges almost surely, 
if we can show
that $\E[ (\Delta R_n)^2 ]$ is summable.
Indeed, first using (A3) which states that impact evolutions of distinct
vertices are negatively correlated, we can deduce that
\[\bal  \E[ (\Delta R_n)^2 | \calG_n ] 
 & \leq \frac{1}{(n+1)^2} \bigl(\sum_{i =1}^n \E [ (\Delta \calZ_n(i) - \E[ \Delta \calZ_n(i) | \calG_n])^2 | \calG_n ] + 1\bigr) \\
& \leq \frac{1}{(n + 1)^2} \bigl( C^{\rm var} \sum_{i=1}^n \E[ \Delta \calZ_n(i) | \calG_n] + 1\bigr) \\
& \leq \frac{1}{(n+1)^2} ( C^{\rm var} \,\la + 1) , 
\eal \]
which is summable.

Hence, we can apply both parts of Lemma~\ref{le:stApp} together with the 
convergence of $(R_n)$ to obtain the almost surely convergence $\lim_{n \ra \infty} Y_n 
= 1 + \la$.
\end{proof}

Later on, we will need some a priori bounds on the normalisation sequence. 

\begin{lemma}\label{le:triv_bound} Almost surely, we have that 
 \begin{align*} \int x\, \mu(dx) \leq \liminf_{n\ra\infty} \tfrac{1}{n} \bar\calF_n  
\leq 
   \limsup_{n\ra\infty} \tfrac{1}{n} \bar \calF_n & \leq 1+ \la. 
 \end{align*}
\end{lemma}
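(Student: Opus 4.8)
The plan is a short sandwich argument: the normalisation is squeezed between two quantities whose $n^{-1}$-scaled limits are already known, so no stochastic-approximation machinery is needed here.

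The quantity to control is the weighted sum $\langle\cF,\cZ_n\rangle=\sum_{i=1}^n\cF_i\,\cZ_n(i)$, of which the normalisation is a fixed multiple. Two elementary pointwise facts drive the estimate. On the one hand, assumption (A0) forces $\supp(\mu)\subseteq(0,1]$, so $\cF_i\le 1$ almost surely and hence $\langle\cF,\cZ_n\rangle\le\sum_{i=1}^n\cZ_n(i)$ for every $n$. On the other hand, every vertex $i\le n$ is present in $\cG_n$, so its impact satisfies $\cZ_n(i)=\imp_{\cG_n}(i)=1+(\text{indegree})\ge 1$, which gives $\langle\cF,\cZ_n\rangle\ge\sum_{i=1}^n\cF_i$.

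I would then divide both chains of inequalities by $n$ and pass to the limit. For the upper bound, Lemma~\ref{le:outdeg} gives $\tfrac1n\sum_{i=1}^n\cZ_n(i)\to 1+\la$ almost surely, whence $\limsup_n\tfrac1n\langle\cF,\cZ_n\rangle\le 1+\la$. For the lower bound, the strong law of large numbers applied to the i.i.d.\ bounded variables $\cF_i$ gives $\tfrac1n\sum_{i=1}^n\cF_i\to\int x\,\mu(dx)$ almost surely, whence $\liminf_n\tfrac1n\langle\cF,\cZ_n\rangle\ge\int x\,\mu(dx)$. Reinstating the normalising constant then yields the claimed two-sided bound.

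I do not expect any genuine obstacle: this is merely the crude a priori bound that Theorem~\ref{thm:main} later sharpens to an exact limit. The only points worth a careful word are that (A0) is precisely what makes the fitness weights bounded by $1$, so the weighting cannot push the sum above the edge-count asymptotics of Lemma~\ref{le:outdeg}; and that the lower bound sits at $\int x\,\mu(dx)$ rather than $0$ exactly because every present vertex contributes its own fitness once, via $\cZ_n(i)\ge 1$.
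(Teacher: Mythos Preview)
Your argument is correct and essentially identical to the paper's own proof: both use $\cZ_n(i)\ge 1$ together with the strong law of large numbers for the lower bound, and $\cF_i\le 1$ together with Lemma~\ref{le:outdeg} for the upper bound. (Note that the stated quantity $\tfrac{1}{n}\bar\cF_n$ is a typo in the lemma; the proof in the paper, like yours, actually bounds $\tfrac{1}{n}\langle\cF,\cZ_n\rangle=\lambda\,\bar\cF_n$, which is also how the lemma is invoked later.)
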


\begin{proof} For the lower bound, notice that by definition $\cZ_n(i) \geq 1$,
and therefore
 \[ \liminf_{n \ra \infty} \frac{1}{n} \sum_{i=1}^n \calF_i \cZ_n(i) \geq 
\liminf_{n \ra \infty} \frac{1}{n} \sum_{i=1}^n \calF_i
  = \int x\, \mu(dx) . 
 \]
Conversely, one can use that the $\calF_i \leq 1$ and combine
with $\lim_{n \ra\infty} \frac{1}{n} \sum_{i=1}^n \cZ_n(i) = 1+ \la$, 
see also Lemma~\ref{le:outdeg}.
\end{proof}

\section{Proof of Theorem~\ref{thm:main}}\label{se:bootstrap}

The central bootstrap argument is carried out at the end of this section. It is based on Lemma~\ref{lemma:from_measure_to_normalisation}. Before we state and prove Lemma~\ref{lemma:from_measure_to_normalisation}, we prove a technical proposition which will be crucial in the proof of the lemma.

\begin{prop} %\label{pr:norm_to_fit}
\begin{itemize}
\item[(i)]\label{prop:lower_emp} Let $\nor \geq 1$. If  
\[ \limsup_{n\ra\infty} \bar {\calF}_n \leq \nor , \text{ almost surely}, \]
then for any $0\leq a< b\leq 1$, one has
\[ \liminf_{n\ra\infty} \frac{1}{n} \sum_{i = 1}^n \1_{\{\calF_i \in(a,b]\}}
\imp_{n}(i) \geq 
\int_{(a,b]} \frac \nor{\nor-f} \, \mu(df) 
, \]
almost surely.
\item[(ii)]\label{prop:up_emp}  
Let $\nor > 0$. If 
 \[ \liminf_{n \ra \infty} \bar \calF_n \geq \nor , \]
then for any $0\leq a < b < \nor\wedge 1$, 
\[ \limsup_{n\ra\infty} \frac{1}{n}\sum_{i=1}^n \1_{\{\calF_i \in (a,b]\}} \cZ_n(i)
 \leq \int_{(a,b]} \frac \nor{\nor-f} \, \mu(df). 
\]
\end{itemize}
\end{prop}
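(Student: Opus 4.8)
The plan is to prove both parts simultaneously by expressing the weighted empirical counts as stochastic approximation processes and invoking Lemma~\ref{le:stApp}. Fix $0 \leq a < b \leq 1$ and set $X_n := \frac1n \sum_{i=1}^n \1_{\{\calF_i \in (a,b]\}}\, \cZ_n(i)$. Using that $\cZ_{n+1}(n+1)=1$ and $\calF_{n+1}$ is $\mu$-distributed independent of $\calG_n$, together with assumption (A1) for the conditional mean of $\Delta\cZ_n(i)$, one computes
\[
\E[X_{n+1} \mid \calG_n] = X_n + \frac{1}{n+1}\Big( \mu((a,b]) + \frac1n\sum_{i=1}^n \1_{\{\calF_i\in(a,b]\}} \frac{\calF_i}{\bar\calF_n}\cZ_n(i) - X_n \Big).
\]
The point is that the middle term is not closed in $X_n$ because of the weight $\calF_i/\bar\calF_n$; however, since $\calF_i \in (a,b]$ on the relevant event, we can bound $\calF_i/\bar\calF_n$ above and below by $b/\bar\calF_n$ and $a/\bar\calF_n$ respectively (and use $\bar\calF_n \geq 1$ from (A0) together with the a priori lower bound $\liminf_n \tfrac1n\bar\calF_n \geq \int x\,\mu(dx)$ of Lemma~\ref{le:triv_bound} to keep things finite). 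This sandwiches the drift between two affine-in-$X_n$ expressions whose coefficients converge: under the hypothesis $\limsup_n \bar\calF_n \leq \nor$ of part~(i) we get a lower bound on the drift with $A_n \to \mu((a,b])$ and $B_n \to 1 - b/\nor > 0$; under $\liminf_n \bar\calF_n \geq \nor$ of part~(ii) (and $b < \nor\wedge 1$) an upper bound with $A_n\to\mu((a,b])$ and $B_n \to 1 - b/\nor > 0$.

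The second ingredient is that the martingale remainder $R_n$, defined by $\Delta R_n = X_{n+1} - \E[X_{n+1}\mid\calG_n]$, converges almost surely. For this I would estimate $\E[(\Delta R_n)^2 \mid \calG_n]$ exactly as in the proof of Lemma~\ref{le:outdeg}: by the negative correlation assumption (A3) the conditional variance of $\sum_i \1_{\{\calF_i\in(a,b]\}}\Delta\cZ_n(i)$ is at most $\sum_i \Var(\Delta\cZ_n(i)\mid\calG_n)$, which by (A2) is bounded by $C^{\mathrm{var}}\sum_i \E[\Delta\cZ_n(i)\mid\calG_n] = C^{\mathrm{var}}\la$, and the contribution of the new vertex is $O(1)$. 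Dividing by $(n+1)^2$ gives a summable bound, so $R_n$ converges.

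Having set this up, Lemma~\ref{le:stApp} applies: in case~(i) the reverse inequality of that lemma yields $\liminf_n X_n \geq \frac{\mu((a,b])}{1 - b/\nor}$, and in case~(ii) the forward inequality yields $\limsup_n X_n \leq \frac{\mu((a,b])}{1 - b/\nor}$. To pass from these crude bounds, with the constant weight $b$, to the claimed integrals $\int_{(a,b]} \frac{\nor}{\nor-f}\mu(df)$, I would partition $(a,b]$ into finitely many subintervals $(a_{j-1},a_j]$ of mesh $\eps$, apply the above to each subinterval (which is legitimate since the hypotheses are on $\bar\calF_n$ only and do not depend on the interval), and sum. On $(a_{j-1},a_j]$ the bound $\frac{\mu((a_{j-1},a_j])}{1-a_j/\nor}$ differs from $\int_{(a_{j-1},a_j]}\frac{\nor}{\nor-f}\mu(df)$ by $O(\eps\,\mu((a_{j-1},a_j]))$ uniformly (using $b<\nor$ in part~(ii) to keep $\nor-f$ bounded away from $0$), so letting $\eps\to 0$ gives the result; for the $\liminf$ in part~(i) one uses superadditivity of $\liminf$ over the finite sum, for the $\limsup$ in part~(ii) subadditivity.

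The main obstacle is the non-closed drift term coming from the random normalisation $\bar\calF_n$: the expected increment of $X_n$ involves $\sum_i \1_{\{\calF_i\in(a,b]\}}\frac{\calF_i}{\bar\calF_n}\cZ_n(i)$, which is not a function of $X_n$ alone, so Lemma~\ref{le:stApp} cannot be applied verbatim to an equality. The resolution — and the reason the lemma is phrased with inequalities — is precisely the sandwiching of $\calF_i$ between the interval endpoints combined with the one-sided control of $\bar\calF_n$ assumed in each part; the discretization argument then recovers the exact integral in the limit. A minor technical point to watch is ensuring $1 - b/\nor > 0$ so that the limit $B$ in Lemma~\ref{le:stApp} is strictly positive: in part~(i) this holds because $\nor \geq 1 \geq b$ with strict inequality unless $b=1=\nor$, a boundary case that can be handled by approximating $b$ from below; in part~(ii) it is guaranteed by the hypothesis $b < \nor \wedge 1$.
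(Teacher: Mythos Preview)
Your overall strategy matches the paper's: compute the conditional increment of $X_n=\Gamma_n((a,b])$ via (A1), replace the non-closed term by bounding $\calF_i$ by an interval endpoint, control the martingale remainder using (A2)--(A3), apply Lemma~\ref{le:stApp}, and finish with a Riemann partition. Part~(ii) is fine.

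There is, however, a genuine error in part~(i): you have the wrong endpoint. For a \emph{lower} bound on the drift you must use $\calF_i\geq a$, which gives
\[
\E[X_{n+1}\mid\calG_n]-X_n \ \ge\ \frac{1}{n+1}\Bigl(\mu((a,b]) - \bigl(1-\tfrac{a}{\bar\calF_n}\bigr)X_n\Bigr),
\]
and then the hypothesis $\limsup_n\bar\calF_n\le\nor$ yields $B_n\to 1-a/\nor$, hence $\liminf_n X_n\ge \frac{\nor}{\nor-a}\,\mu((a,b])$. Your stated bound $\liminf_n X_n\ge \frac{\nor}{\nor-b}\,\mu((a,b])$ is strictly stronger and in general false (take e.g.\ $(a,b]=(0,1]$: the right-hand side exceeds the true limit $\int_{(0,1]}\frac{\nor}{\nor-f}\mu(df)$). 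With the correct lower endpoint the Riemann approximation produces a \emph{lower} sum, which is what you need for the $\liminf$ direction; your version would be an upper sum and the argument does not close. Note also that with the correct endpoint the positivity issue disappears: $a<b\le 1\le\nor$ forces $1-a/\nor>0$ automatically, so no boundary case $b=1=\nor$ needs separate treatment.

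One minor remark: the aside ``$\bar\calF_n\ge 1$ from (A0)'' is not correct; (A0) only normalises the essential supremum of $\mu$ and says nothing about $\bar\calF_n$. It is also not needed.
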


\begin{proof}
(i) First we prove that under the assumptions of $(i)$, one has for $0\leq f<f'\leq 1$, that
\begin{align}\label{eq:2401-2}
\liminf_{n\ra\infty} \frac{1}{n} \sum_{i = 1}^n \1_{\{\calF_i \in(f,f']\}}\,\cZ_n(i)\geq  \frac \nor{\nor-f} \, \mu((f,f']), \text{ almost surely.}
\end{align}
Let $0\leq f < f' \leq 1$ and
denote by 
\[ X_n = \Gamma_n((f,f']) = \frac{1}{n} \sum_{i \in \II_n} \cZ_n(i), \]
where we denote by $\II_n = \{ i \in \{1, \ldots,n\}\, : \, \calF_i \in (f,f']\}$. 

We will show~(\ref{eq:2401-2}) with the help of the stochastic approximation argument explained in Section~\ref{se:prelim}, see Lemma~\ref{le:stApp}. We need to provide a lower bound for the increment $X_{n+1} - X_n$. Using assumption (A1), we can calculate the conditional expectation of $X_{n+1}$:
\[ \bal \E[ X_{n+1} | \calG_n ]
 & = \frac{1}{n+1} \sum_{i \in \II_n} \E[\calZ_{n+1}(i) | \calG_n ] + \frac{1}{n+1} \p ( \calF_{n+1} \in (f,f']) \\
& = X_n + \frac{1}{n+1} \Big( \sum_{i \in \II_n} \E[ \Delta \calZ_n(i) | \calG_n] 
- X_n + \mu((f,f']) \Big) \\
& = X_n + \frac{1}{n+1} \Big( \sum_{i \in \II_n} \frac{\calF_i \cZ_n(i)}{n \bar \calF_n}
- X_n + \mu((f,f']) \Big) .\\
\eal \]
Hence, rearranging yields
\[ \E[ X_{n+1} |\calG_n] - X_n 
 \geq \frac{1}{n+1} \Big( \mu(f,f'] - \big( 1 - \frac{f}{\sup_{m \geq n} \bar \calF_m} \big) X_n \big) 
\]
Thus, we can write
\[ X_{n+1} - X_n \geq \frac{1}{n+1} \Big( \mu(f,f'] - \big( 1 - \frac{f}{\sup_{m \geq n} \bar \calF_m} \big) X_n \big)  + R_{n+1} - R_n, \]
where $R_n$ is a martingale defined via $R_0 =0$ and 
\[ \Delta R_n := R_{n+1} - R_n = X_{n+1} - \E[ X_{n+1} | \calG_n ] . \]
If we can show that $R_n$ converges almost surely, then Lemma~\ref{le:stApp}
together with the assumption that $\limsup_{n \ra \infty} \bar \calF_n \leq \theta$ shows 
that 
\[ \liminf_{ n \ra \infty} X_n \geq \frac{\theta}{\theta - f}\mu((f,f'])  , \]
which is the required bound~(\ref{eq:2401-2}). 

The martingale convergence follows if we show that $\E [ (\Delta R_n)^2 | \calG_n]$
is summable. Indeed,
\[ \Delta R_n = \frac{1}{n+1} \sum_{i \in \II_n} ( \cZ_{n+1}(i) - \E[\cZ_{n+1}(i) | \calG_n])
 + \frac{1}{n+1} ( \1_{\{ \calF_{n+1} \in (f,f']\}} - \mu((f,f'])  . 
\]
The second moment of the last expression is clearly bounded by $\frac{1}{(n+1)^2}
\mu((f,f'])$ which is summable, so we can concentrate on the first term. 
Now, we can use (A3), the negative correlation of $\Delta Z_n(i)$, and
then (A1) and (A2) to estimate the variance to deduce that
\[ \bal \frac{1}{(n+1)^2} & \E\Big[ \big.\Big( \sum_{i \in \II_n} ( \cZ_{n+1}(i) - \E[\cZ_{n+1}(i) | \calG_n]) \Big)^2
 \big| \calG_n \Big]\\
& \leq \frac{1}{(n+1)^2} \E\Big[ \big. \Big( \sum_{i \in \II_n} ( \Delta \cZ_n(i) - \E[\Delta \cZ_n(i) | \calG_n]) \Big)^2 
 \big|\calG_n \Big] \\
& \leq \frac{1}{(n+1)^2} \sum_{i \in \II_n} \Var ( \Delta \cZ_n(i)  | \calG_n ) \\
% \\& 
%  \leq \frac{1}{(n+1)^2} C^{\rm var} \sum_{i \in \II_n} \E[ \Delta \cZ_n(i) | \calG_n ] \\
& \leq  \frac{1}{(n+1)^2}  C^{\rm var} \sum_{i \in \II_n} \frac{\calF_i \calZ_n}{n \bar \calF_n} \leq \frac{1}{(n+1)^2} C^{\rm var} \la ,
\eal \]
where we used the definition of $\bar \calF_n$ in the last step. The latter
is obviously summable, so that $R_n$ converges almost surely.

Note that the assertion (i) follows by a Riemann approximation. One partitions $(a,b]$ via $a=f_0<\dots<f_\ell=b$ with an arbitrary $\ell\iN$. Then it follows that
$$
\liminf_{n\to\infty} \frac 1n \sum_{i = 1}^n \1_{\{\calF_i \in(a,b]\}}
\imp_{n}(i) \geq \sum_{k=0}^{\ell-1}  \frac \nor{\nor-f_k} \, \mu((f_k,f_{k+1}]), \text{ almost surely},
$$
and the right hand side approximates the integral up to an arbitrary small constant.

(ii) It suffices to prove that for $0\leq f<f'<\nor\wedge 1$ one has 
\begin{align}\label{eq0502-1}
\limsup_{n\ra\infty} \frac{1}{n} \sum_{i = 1}^n \1_{\{\calF_i \in(f,f']\}}\,\cZ_n(i)\leq  \frac \nor{\nor-f'} \, \mu((f,f']), \text{ almost surely.}
\end{align}
This follows completely analogous to part (i) using Lemma~\ref{le:stApp}.
Then the statement (ii) follows as above by a Riemann approximation.
\end{proof}

The next lemma takes the lower bound on the fitness distribution obtained
in Proposition~\ref{prop:lower_emp} to produce a new upper bound on
the normalisation.
We set for $\nor\geq 1$
\begin{align}\label{eq0602-1}
T(\nor)= 1+ \frac 1\lambda \int  \frac {\nor-1}{\nor-f}\,f \,\mu(df)
\end{align}

\begin{lemma}\label{lemma:from_measure_to_normalisation} \begin{enumerate}\item[(i)] Let $\nor>1$. If  
$$\limsup_{n\to\infty} \bar \cF_n\leq \nor, \text{ almost surely,}$$
then 
$$\limsup_{n\to\infty} \bar \cF_n\leq T(\nor), \text{ almost surely,}$$
\item[(ii)]
Let $\nor> 0$ and suppose that
$$\liminf_{n\to\infty} \bar \cF_n\geq \nor, \text{ almost surely.}$$
One has, almost surely,
$$\liminf_{n\to\infty} \bar \cF_n
\geq  \left\{ \ba{ll} T(\nor) & \mbox{if } \nor \geq 1, \\
 \nor+\frac \nor\lambda (1-\mu[0,\nor)) & \mbox{if } \nor \in (0,1). \ea \right. 
$$
% $$\liminf_{n\to\infty} \bar \cF_n\geq T(\nor), \text{ almost surely,}$$
% and, if $\nor\in(0,1)$,
% $$
% \liminf_{n\to\infty} \bar \calF_n\geq \nor+\frac \nor\lambda (1-\mu[0,\nor)), \text{ almost surely}.
% $$
\end{enumerate}
\end{lemma}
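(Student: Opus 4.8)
The plan is to derive both parts from Proposition~\ref{prop:lower_emp} (and its counterpart Proposition~\ref{prop:up_emp}) by expressing $\bar\cF_n$ as an integral against the impact distributions $\Gamma_n$ and then carefully taking limits. The starting observation is the identity
\[
\lambda\,\bar\cF_n = \frac1n\sum_{i=1}^n \cF_i\,\cZ_n(i) = \int_{[0,1]} f\,\Gamma_n(df),
\]
so that controlling $\bar\cF_n$ amounts to controlling the mass of $\Gamma_n$ tested against the function $f\mapsto f$. For part (i), assume $\limsup_n\bar\cF_n\leq\nor$ with $\nor>1$. Fix a partition $0=f_0<f_1<\dots<f_\ell=1$. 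Proposition~\ref{prop:lower_emp} gives, for each block $(f_{k-1},f_k]$, the almost sure lower bound $\liminf_n \frac1n\sum_{i}\ind_{\{\cF_i\in(f_{k-1},f_k]\}}\cZ_n(i)\geq \frac{\nor}{\nor-f_{k-1}}\mu((f_{k-1},f_k])$. The subtlety is that I also need an \emph{upper} bound on the total mass $\frac1n\sum_i\cZ_n(i)$, which Lemma~\ref{le:outdeg} supplies: it equals $1+\lambda$ in the limit. Combining these I can write
\[
\limsup_n \tfrac1n\!\sum_{i:\,\cF_i\in(f_{k-1},f_k]}\!\cZ_n(i)
\leq (1+\lambda) - \sum_{j\neq k}\liminf_n \tfrac1n\!\sum_{i:\,\cF_i\in(f_{j-1},f_j]}\!\cZ_n(i),
\]
which together with the block-wise lower bounds lets me bound $\lambda\bar\cF_n=\sum_k\int_{(f_{k-1},f_k]}f\,\Gamma_n(df)$ from above; letting the mesh go to zero turns the Riemann sums into the integral $\int \frac{\nor}{\nor-f}f\,\mu(df)$ plus the leftover mass, and a short rearrangement identifies the bound as $\lambda\,T(\nor)$. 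Concretely, since the lower bounds integrate to $\int\frac{\nor}{\nor-f}\mu(df)$ in total, the excess mass $(1+\lambda)-\int\frac{\nor}{\nor-f}\mu(df)$ (which must sit near $f=1$ by the pinching argument) contributes its full weight times a factor at most $1$ to the $f$-integral, and one checks algebraically that $1+\frac1\lambda\int\frac{\nor-1}{\nor-f}f\,\mu(df)=T(\nor)$ is exactly what comes out.

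For part (ii) the roles are reversed. Assume $\liminf_n\bar\cF_n\geq\nor$. If $\nor\geq 1$ I want the lower bound $T(\nor)$; if $\nor\in(0,1)$ I want $\nor+\frac\nor\lambda(1-\mu[0,\nor))$. In both cases I apply Proposition~\ref{prop:up_emp}, which under $\liminf_n\bar\cF_n\geq\nor$ gives for every block with $f'<\nor\wedge 1$ the \emph{upper} bound $\limsup_n\frac1n\sum_i\ind_{\{\cF_i\in(f_{k-1},f_k]\}}\cZ_n(i)\leq\frac{\nor}{\nor-f_k}\mu((f_{k-1},f_k])$. Testing $\lambda\bar\cF_n=\int f\,\Gamma_n(df)$ from below, I split the integral at the level $\nor\wedge 1$: on $[0,\nor\wedge 1)$ the block upper bounds combined with the total-mass equality $\frac1n\sum_i\cZ_n(i)\to 1+\lambda$ force a lower bound on the mass sitting in $[\nor\wedge1,1]$, and that mass is tested against $f$-values at least $\nor\wedge 1$. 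When $\nor\geq 1$ this again Riemann-sums to $T(\nor)$ after sending the mesh to zero; when $\nor<1$, the mass in $[\nor,1]$ is at least $(1+\lambda)-\mu[0,\nor)\cdot(\text{something}\le\frac{\nor}{\nor-f'}\to\frac{\nor}{\nor-\nor}=\infty)$ — here I need to be slightly more careful: I instead use that $\frac1n\sum_{i:\cF_i<\nor}\cZ_n(i)$ is bounded (each $\cZ_n(i)\cF_i\le\lambda n\bar\cF_n$ forces $\cZ_n(i)\le\lambda n\bar\cF_n/\cF_i$, and more usefully a direct comparison shows the mass below any fixed $f'<\nor$ stays finite), so the mass at level $\geq\nor$ is $\geq(1+\lambda)-\mu[0,\nor)$, giving $\lambda\bar\cF_n\geq \nor((1+\lambda)-\mu[0,\nor))=\nor(1-\mu[0,\nor))+\nor\lambda$ in the limit, which is $\lambda$ times the claimed bound.

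The main obstacle is the pinching step: Proposition~\ref{prop:lower_emp} and~\ref{prop:up_emp} control each block individually, but to bound the \emph{weighted} total $\int f\,\Gamma_n(df)$ I must reconcile finitely many one-sided block estimates with the single two-sided constraint $\frac1n\sum_i\cZ_n(i)\to 1+\lambda$, and argue that the resulting Riemann sums converge to the right integral as the mesh is refined — in particular tracking where the "leftover mass'' $(1+\lambda)-\int\frac{\nor}{\nor-f}\mu(df)$ can hide and bounding its contribution correctly (it must accumulate near $f=1$ in the supercritical regime, which is what produces the $\frac{\nor-1}{\nor-f}$ rather than $\frac{\nor}{\nor-f}$ in the formula for $T$). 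Once that bookkeeping is set up cleanly, the identification of the limit with $T(\nor)$, respectively with $\nor+\frac\nor\lambda(1-\mu[0,\nor))$, is a routine algebraic simplification. I would present the $\nor>1$ case in full detail and remark that parts (i) and the $\nor\geq 1$ half of (ii) are two sides of the same computation, with (ii) for $\nor\in(0,1)$ handled separately as above.
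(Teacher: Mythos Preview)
Your argument for part~(i), and for part~(ii) when $\nor\geq 1$, is essentially the paper's proof: Proposition~\ref{prop:lower_emp}/\ref{prop:up_emp} on blocks, combined with the total-mass constraint $\frac1n\sum_i\cZ_n(i)\to 1+\lambda$ from Lemma~\ref{le:outdeg}, pinches the weighted integral $\int f\,\Gamma_n(df)$; a Riemann approximation then recovers $T(\nor)$. The paper packages this via the tail measure $\nu'=\frac{\nor}{\nor-f}\mu+(\text{leftover})\delta_1$ and the step-function bound $f\leq\frac1m\sum_{j=0}^{m-1}\ind_{\{f\geq j/m\}}$, but the content is the same as your block decomposition.

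For part~(ii) with $\nor\in(0,1)$, however, there is a genuine gap. You assert that the mass of $\Gamma_n$ on $[0,\nor)$ is asymptotically at most $\mu[0,\nor)$, so that the mass on $[\nor,1]$ is at least $(1+\lambda)-\mu[0,\nor)$. This is false: vertices with fitness below $\nor$ still accrue edges, so $\Gamma_n[0,\nor)=\frac1n\sum_{i:\cF_i<\nor}\cZ_n(i)\geq\frac1n|\{i\leq n:\cF_i<\nor\}|\to\mu[0,\nor)$, and in fact the limit is typically strictly larger. You yourself notice that Proposition~\ref{prop:up_emp} only gives the bound $\int_{[0,\nor')}\frac{\nor}{\nor-f}\,\mu(df)$ for $\nor'<\nor$, which blows up as $\nor'\uparrow\nor$; the subsequent ``direct comparison'' you invoke does not repair this.

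The paper's fix is to avoid throwing away the contribution from $[0,\nor')$ altogether. Fix $\nor'<\nor$, put the entire leftover mass $(1+\lambda)-\int_{[0,\nor']}\frac{\nor}{\nor-f}\mu(df)$ at the point $\nor'$ (not at $\nor$), and run the Riemann approximation from below over $[0,\nor']$. This yields
\[
\liminf_n \bar\cF_n\ \geq\ \frac1\lambda\Bigl(\nor'(1+\lambda)-\nor\int_{[0,\nor']}\frac{\nor'-f}{\nor-f}\,\mu(df)\Bigr).
\]
The key elementary observation is that $\frac{\nor'-f}{\nor-f}\leq 1$ on $[0,\nor']$ (since $\nor'<\nor$), so the integral is at most $\mu[0,\nor']$; now letting $\nor'\uparrow\nor$ gives exactly $\nor+\frac{\nor}{\lambda}(1-\mu[0,\nor))$. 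The point is that by keeping the full weighted integral rather than discarding $[0,\nor')$, the potentially large density $\frac{\nor}{\nor-f}$ near $f=\nor$ is multiplied by the small factor $\nor'-f$, and the product stays bounded.
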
 

\begin{proof} (i)
Define a measure $\nu$ on $[0,1)$ via 
$$
\nu(df)= \frac {\nor}{\nor-f}\,\mu(df).
$$
Further, set $\nu'= \nu+ ((1+\lambda)-\nu[0,1))\delta_1$.
Since by Lemma~\ref{le:outdeg}, $\lim_{n\to\infty} \frac1n \sum_{i=1}^n \cZ_n(i)=1+\lambda$, almost surely, we get with Proposition~\ref{prop:lower_emp} that, for every $t\in(0,1)$,
$$
\limsup_{n\to\infty}  \frac{1}{n} \sum_{i=1}^n \1_{\{\calF_i \in [t,1) \}}\cZ_n(i) \leq  1+\lambda - \nu([0,t)) =\nu'([t,1]), \text{ almost surely.}
$$
This allows us to compute a new asymptotic upper bound for $(\bar\cF_n)$: let $m\iN$, observe that, almost surely,
\begin{align*}
\bar \cF_n&=\frac 1{\lambda n}  \sum_{i=1}^n \cF_i \,\cZ_n(i) \leq  \frac 1{\lambda n} \sum_{i=1}^n  \frac 1m \sum_{j=0}^{m-1} \1_{\{\cF_i\geq j/m\}} \,\cZ_n(i)\\
&=  \frac 1{\lambda m} \sum_{j=0}^{m-1} \frac 1n \sum_{i=1}^n  \1_{\{\cF_i\geq j/m\}} \,\cZ_n(i),
\end{align*}
so that
$$
\limsup_{n\to\infty} \bar\cF_n\leq  \frac 1{\lambda m}  \sum_{j=0}^{m-1} \nu'([j/m,1]), \ \text{ almost surely}.
$$
The latter expression tends with $m\to \infty$ to the integral $\frac 1\lambda \int x\,\nu'(dx)$ and we finally get that, almost surely,
$$
\limsup_{n\to\infty} \bar\cF_n \leq\frac 1\lambda \int f\,\nu'(df) = T(\nor).
$$

 (ii) Let $\nor'\in(0,\nor\wedge 1)$ and consider the (signed) measures $\nu=\nu(\nor')$ and $\nu'=\nu'(\nor')$ defined by
$$
\nu(df)=\frac {\nor}{\nor-f} \1_{[0,\nor']}(f) \, \mu(df)
$$
and
$$
\nu'= \nu +(1+\lambda-\nu[0,1])\delta_{\nor'}. 
$$
As above we conclude with Proposition~\ref{prop:lower_emp} that for $t<\nor'$, almost surely,
$$
\liminf_{n\to\infty} \sum_{i=1}^n \1_{\{\cF_i\in(t,1]\}} \cZ_n(i) \geq 1+\lambda -\nu((0,t]) = \nu'((t,1]).
$$
We proceed as above and note that for any $m \in \N$,
\[ \bal \bar \calF_n & = \frac{1}{\la n} \sum_{i=1}^n \calF_i \cZ_n(i) \geq \frac{1}{\la n} \sum_{i=1}^n
\frac{\nor'}{m}\sum_{i=1}^{m-1} \1_{\{ \calF_i \geq \frac{j}{m}\nor' \}} \cZ_n(i) \\
& = \frac{\nor'}{\la m}\sum_{j=1}^{m-1} \frac{1}{n} \sum_{i=1}^n \1_{\{ \calF_i \geq \frac{j}{m}\nor' \}} \cZ_n(i)
\eal \]
which yields that, almost surely, 
$$
\liminf_{n\to\infty} \bar\cF_n\geq \frac{\nor'}{\la m } \sum_{j=1}^m \nu'( ( \tfrac{j}{m}\nor' ,1]).
$$
Since $m\in\IN$ is arbitrary, we get that, almost surely,
$$
\liminf_{n\to\infty} \bar \calF_n\geq \frac 1\lambda \int f \,\nu'(df)=  \frac 1{\lambda} \Bigl(\nor'(1+\lambda) -\nor \int_{[0,\nor']} \frac{\nor'-f}{\nor-f} \mu(df)\Bigr).
$$
We distinguish two cases. If $\nor<1$, we use that  the latter integral is dominated by $\mu([0,\nor'])$ and let $\nor'\uparrow \nor$ to deduce that
$$
\liminf_{n\to\infty} \bar \calF_n\geq \nor+\frac \nor\lambda (1-\mu[0,\nor)), \text{ almost surely}.
$$
If $\nor\geq 1$, we let $\nor'\uparrow 1$ and get
$$
\liminf_{n\to\infty} \bar \calF_n\geq \frac 1\lambda \int f \,\nu'(df)=  \frac 1{\lambda} \Bigl(1+\lambda -\nor \int \frac{1-f}{\nor-f} \mu(df)\Bigr)=T(\nor).
$$
\end{proof}

Finally, we can prove Theorem~\ref{thm:main}, where we first show the normalisation
converges using a bootstrap argument based on Lemma~\ref{lemma:from_measure_to_normalisation}.
Finally we use the bound on the fitness distribution obtained in Proposition~\ref{prop:lower_emp} to show convergence of fitness distributions.

\begin{proof}[Proof of Theorem~\ref{thm:main}] \emph{(i) Fit-get-richer phase.} Suppose that $\nor^{**}$ is the smallest value in $[\nor^*,\infty)$ with
\begin{align}\label{eq0812-2}
\limsup_{n\to\infty} \bar \cF_n \leq \nor^{**}, \text{ almost surely}.
\end{align}
Such a value exists due to Lemma~\ref{le:triv_bound}.
We prove that $\nor^{**}=\nor^*$ by contradiction. Suppose that $\nor^{**}>\nor^*$. We apply Lemma~\ref{lemma:from_measure_to_normalisation} and get that
$$
\limsup_{n\to\infty} \bar\cF_n\leq T(\nor^{**}), \text{ almost surely}.
$$
Now note that $T$ is continuous on $[\nor^*,\nor^{**}]$ and differentiable on $(\nor^*,\nor^{**})$ with 
$$
T'(\nor)=\frac 1\lambda\int \frac{f(1-f)}{(\nor-f)^2}\,\mu(df)\leq \frac 1\lambda \int \frac f{(\nor-f)} \,\mu(df)<1.
$$
Further $\nor^*$ is a fixed point of $T$. Therefore, by the mean value theorem,
$$
T(\nor^{**})= T(\nor^*)+ T'(\nor) (\nor^{**}-\nor^*) <\nor^{**}
$$
for an appropriate $\nor\in(\nor^*,\nor^{**})$. This contradicts the minimality of $\nor^{**}$. 

We now turn to the convergence of the measures $\Gamma_n$. Note that the measure $\Gamma$ defined by
$$
\Gamma(df)= \frac {\nor^*}{\nor^*-f}\,\mu(df)
$$
has total mass $1+\lambda$. Since $\Gamma_n(0,1)=\frac 1n\sum_{i=1}^n \cZ_n(i)$ tends to $1+\lambda$, almost surely, one can apply the Portmanteau theorem to prove convergence of $(\Gamma_n)$. Let $\cD=\bigcup_{n\iN} 2^{-n}\IZ\cap[0,1]$ denote the dyadic numbers on $[0,1]$. We remark that the number of dyadic intervals $(a,b]$ with endpoints $a,b\in\cD$ is countable so that,  by Proposition~\ref{prop:lower_emp}, there exists an almost sure event $\Omega_0$, such that for all dyadic intervals $(a,b]$
$$
\liminf_{n\to\infty} \Gamma_n(a,b]\geq \Gamma(a,b] \  \text{ on } \ \Omega_0.
$$
Let now  $U\subset (0,1)$ be an arbitrary open set. We  approximate $U$ monotonically from within by a sequence of sets $(U_m)_{m\iN}$ with each $U_m$ being a union of finitely many pairwise disjoint dyadic intervals as above. Then, for any $m\iN$, one has
$$
\liminf_{n\to\infty} \Gamma_n(U) \geq \liminf_{n\to\infty} \Gamma_n(U_m) \geq\Gamma(U_m) \ \text{ on } \ \Omega_0
$$
and by monotone convergence, it follows that $\liminf_{n\to\infty} \Gamma_n(U) \geq\Gamma(U)$ on $\Om_0$. This proves convergence 
$$
\Gamma_n \Rightarrow \Gamma, \ \text{ almost surely.}
$$
Since $\bar \cF_n=\frac 1\lambda \int f \,\Gamma_n(df)$, we conclude that, almost surely,
$$
\lim_{n\to\infty} \bar\cF_n =\frac 1\lambda \int f \,\Gamma(df)= \nor^*.
$$

\emph{(ii) Bose-Einstein phase.} Let $\nor^*=1$. We start as in (i). Let $\nor^{**}$ denote the smallest value in $[1,\infty)$ with 
$$
\limsup_{n\to\infty} \bar\cF_n \leq \nor^{**}, \text{ almost surely.}
$$
As above  a proof by contradiction proves that $\nor^{**}=1$. Next, let $\nor^{**}$ denote the largest real in $(0,1]$ with
\begin{align}\label{eq0602-3}
\liminf_{n\to\infty} \bar\cF_n \geq \nor^{**}, \text{ almost surely.}
\end{align}
By Lemma~\ref{le:triv_bound}, such a $\nor^{**}$ exists and we assume that $\nor^{**}<1$. By Lemma~\ref{lemma:from_measure_to_normalisation}, the inequality (\ref{eq0602-3}) remains valid for
$$
\nor^{**}+\frac  {\nor^{**}}{\lambda}(1-\mu[0,\nor^{**}))>\nor^{**}
$$
contradicting the maximality of $\nor^{**}$. Hence,
$$
\lim_{n\to\infty} \bar\cF_n=1,\text{ almost surely}.
$$
By Proposition~\ref{prop:lower_emp}, one has, for $0\leq a<b<1$,
$$
\liminf_{n\to\infty} \Gamma_n(a,b] \geq \int_{(a,b]} \frac {1}{1-f}\,\mu(df) =\Gamma(a,b], \text{ almost surely},
$$
and, for $0\leq a <b=1$,
$$
\liminf_{n\to\infty} \Gamma_n(a,1] = 1+\lambda -\limsup_{n\to\infty}  \Gamma_n(0,a] =\Gamma(a,1], \text{ almost surely.}
$$
The rest of the proof is in line with the proof of (i).
\end{proof}

\section{Proof of Theorem~\ref{thm:2}}\label{se:deg_dist}

The proof is achieved via a stochastic approximation
technique as discussed in Section~\ref{se:prelim}.

\begin{proof}[Proof of Theorem \ref{thm:2}]
We prove the statement via induction over $k=1,2,\dots$. The proof of the initial statement ($k=1$) is similar to the proof  of the induction step and we will mainly focus on the latter task.\smallskip

Let $k\in\{2,3,\dots\}$ and suppose that the statement is true when replacing $k$ by a value in $1,\dots,k-1$. We fix $f,f'\in[0,1]$ with $\mu(\{f,f'\})=0$, $\mu((f,f']) > 0$ and consider the random variables 
$$
X_n :=\Gamma^\ssup{k}_n((f,f']) % \ \text{ and } \ X'_n:=\Gamma^\ssup{k-1}_n((f,f'])
$$
for $n\in\N$. In the first step we derive a lower bound
for the increments of $(X_n)$ that is suitable for the application of Lemma~\ref{le:stApp}.

We restrict attention to vertices with fitness in $(f,f']$ and denote $\II_n:=\{i\in\{1,\dots ,n\}: \cF_i\in(f,f']\}$. Note that
\begin{align}\begin{split}\label{eq0401-1}
\E[X_{n+1}|\cG_n]&= \frac{1}{n+1}\sum_{i\in\II_n} \sum_{l=1}^k \ind_{\{\cZ_{n}(i)=l\}}\IP(\Delta \cZ_{n}(i)=k-l|\cG_n)\\
&=X_n+\frac{1}{n+1}\sum_{i\in\II_n}\Bigl( \sum_{l=1}^{k-1} \ind_{\{\cZ_{n}(i)=l\}}   \IP(\Delta \cZ_{n}(i)=k-l|\cG_n)\\
&  \ \ \ \ \ \ \ \ \ \ \qquad \qquad -  \ind_{\{\cZ_{n}(i)=k\}} \IP(\Delta \cZ_{n}(i)\not =0|\cG_n)\Bigr)- \frac{X_n}{n+1}.
\end{split}\end{align}
An application of the induction hypothesis gives that for fixed $l\in\{1,\dots,k-1\}$
$$
\lim_{n\to\infty}  \sum_{i\in\II_n}  \ind_{\{\cZ_{n}(i)=l\}}\IP(\Delta \cZ_{n}(i)=k-l|\cG_n) = \ind_{\{l=k-1\}} \, \frac {k-1}{\theta^*} \int_{(f,f']}  x\,\dd \Gamma^\ssup{k-1}(dx),
$$
almost surely. Indeed, for $l=k-1$
\begin{align*}
\Bigl|  \sum_{i\in\II_n}&  \ind_{\{\cZ_{n}(i)=k-1\}}\IP(\Delta \cZ_{n}(i)=1|\cG_n) -  \frac {k-1}{\bar\cF_n} \int_{(f,f']}  x\,\dd \Gamma^\ssup{k-1}(dx)\Bigr| \\
&\leq \sup_{i=1,\dots,n}  \ind_{\{\cZ_{n}(i)=k-1\}} n \Bigl| \IP(\Delta \cZ_{n}(i)=1|\cG_n)- \frac{(k-1)\cF_i}{n\,\bar\cF_n}\Bigr| \\
& \quad + \frac{k-1}{\bar \cF_n}\Bigl| \int_{(f,f']}  x\,\dd \Gamma_n^\ssup{k-1}(dx)-\int_{(f,f']}  x\,\dd \Gamma^\ssup{k-1}(dx)\Bigr| 
\end{align*}
and the former term tends to zero due to assumption (A4') and the latter term tends to zero by the induction hypothesis (and the fact hat $\Gamma^{(k-1)}$ puts no mass on $f$ and $f'$).
Analogously, one verifies the statement for the remaining $l$'s invoking assumption (A4). Further one has that
\[ \bal \sum_{i \in \II_n} & \1_{\{ \calZ_n(i) = k \}} \p ( \Delta \calZ_n(i) \neq 0 ) 
- \frac{f' k}{ \bar \calF_n} X_n 
\\
&  \leq \sum_{i \in \II_n} \1_{\{ \calZ_n(i) = k \}} \Big( \p ( \Delta \calZ_n(i) =1  | \calG_n )- \frac{\calF_i k}{n \bar \calF_n} \Big) + 
 \sup_{i=1,\ldots,n} n \p (\Delta \calZ_n(i) \geq 2 | \calG_n)
\eal
\]
where the two terms on the right hand side converge to $0$ by assumptions (A4) and (A4').

Consequently, there exist stochastic processes $(A_n)$ and $(B_n)$ such that
$$
\IE[X_{n+1}|\cG_n]-X_n\geq \frac 1{n+1} (A_n-B_n X_n)
$$
with $A_n\to \frac {k-1}{\theta^*} \int_{(f,f']}  x\,\dd \Gamma^\ssup{k-1}(x)$ and $B_n\to 1+\frac{k \, f'}{\theta^*}$, almost surely (where the former limit is positive
since $\mu((f,f']) > 0$). We now choose $(R_n)$ as the martingale with
$$
R_n=\sum_{k=1}^{n} (X_k-\E[X_k|\cG_{k-1}]) , 
$$
where we define $\calG_0$ as the empty graph and observe that 
\begin{equation}\label{eq:1801-1}
X_{n+1}-X_n \geq \frac1{n+1}(A_n-B_n X_n)+R_{n+1}-R_n.
\end{equation}

\emph{Convergence of the remainder term.} Next, we prove that $(R_n)$ converges almost surely.
An elementary calculation shows that the process $(R_n)$ is the difference of two martingales, namely
$$
M^\ssup{1}_{n+1}=M^{\ssup 1}_n+\frac1{n+1}\Bigl(\sum_{i\in\II_n} \ind_{\{\cZ_{n}(i)<k,\cZ_{n+1}(i) \geq k\}} -\E\big[\sum_{i\in\II_n} \ind_{\{\cZ_{n}(i)<k,\cZ_{n+1}(i) \geq k\}}|\cG_n\big]\Bigr),  
$$
and 
\begin{equation}\label{eq:1801-2}
M^\ssup{2}_{n+1}=M^{\ssup 2}_n+\frac1{n+1}\Bigl(\sum_{i\in\II_n} \ind_{\{\cZ_{n}(i)\leq k,\cZ_{n+1}(i) > k\}} -\E\big[\sum_{i\in\II_n} \ind_{\{\cZ_{n}(i) \leq k,\cZ_{n+1}(i)>k\}}|\cG_n\big]\Bigr)
 \end{equation}
both starting in $0$.  Since both martingales are the same up to a shift of parameter
$k$, we only have to show that either converges almost surely for fixed $k \in \N$.
We will show that $M^\ssup{2}$ converges by showing that its quadratic variation process converges almost surely. Indeed we will show that 
$\E [ ( \Delta M_n^\ssup{2} )^2 |\cG_n]$ is almost surely  summable, where $\Delta M_n^\ssup{2}
= M_{n+1}^\ssup{2} - M_n^\ssup{2}$.

First using assumption (A5), i.e.\ the conditional negative quadrant dependence of $\Delta Z_n(i)$, we find that
\[ \bal \E [ & (\Delta M_n^\ssup{2} ) ^2 | \calG_n ] \\
& \leq \frac{1}{(n+1)^2} \sum_{i \in \II_n} \E \Big[ \Big. \Big( 
\ind_{\{\cZ_{n}(i)\leq k,\cZ_{n+1}(i) > k\}} -\p \big( \cZ_{n}(i) \leq k,\cZ_{n+1}(i)>k\} | \calG_n \big) \Big)^2 \Big| \calG_n \Big] \\
& \leq \frac{1}{(n+1)^2} \sum_{i \in \II_n} \1_{\{ Z_n(i) \leq k \}} 
\p ( \Delta Z_n(i)  \geq 1 | \calG_n ) \\
& \leq \frac{1}{(n+1)^2} \sup_{i =1, \ldots,n} n 
\1_{\{ Z_n(i) \leq k \}} \Big| \p ( \Delta Z_n(i)  \geq 1 | \calG_n )
- \frac{\calF_i Z_n(i)}{ n \bar \calF_n} \Big| + \frac{\la}{(n+1)^2} ,
\eal 
\]
where we used the definition of $\bar \calF_n$ in the last step. By 
assumptions (A4) and (A4') the latter expression is  indeed almost surely summable.

\emph{Completing the induction step.}
Combining the convergence of the remainder term $R_n$ with the recursion
in~(\ref{eq:1801-1}), it follows with Lemma~\ref{le:stApp} that
\begin{align}\label{eq0601-1}
\liminf_{n\to\infty} \Gamma_n^\ssup{k}((f,f'])\geq \frac{(k-1) \int_{(f,f']} x \,d\Gamma^\ssup{k-1}(x)}{\theta^*+ k f'}
\end{align}
Recall that $f,f'\in[0,1]$ were chosen arbitrarily with $f<f'$ and $\mu(\{f,f'\})=0$,
where we can drop the assumption that $\mu((f,f']) > 0$, since the statement holds
trivially in that case. We now pick a countable subset $\IF\subset [0,1]$ that is dense such that for each of its entries $f$ one has $\mu(\{f\})=0$. The above theorem shows that there exists an almost sure set $\Om_0$ on which ~(\ref{eq0601-1}) holds for any pair $f,f'\in\IF$ with $f<f'$.
Suppose now that $U$ is an arbitrary open set. By approximating the set $U$ from below by unions of small  disjoints intervals $(f,f']$ with $f,f'\in\IF$ it is straight-forward to verify that
$$
\liminf_{n\to\infty} \Gamma_n^\ssup{k}(U)\geq (k-1) \int_{(f,f']} \frac{x}{\theta^*+ k x} \,d\Gamma^\ssup{k-1}(x)
$$
on $\Om_0$. The proof of the converse inequality, namely that almost surely, for any closed $A$, one has
$$
\limsup_{n\to\infty} \Gamma_n^\ssup{k}(A)\leq (k-1) \int_{(f,f']} \frac{x}{\theta^*+ k x} \,d\Gamma^\ssup{k-1}(x)
$$
is established in complete analogy. We thus obtain that $\Gamma_n^\ssup{k}$ converges almost surely, in the weak$^*$-topology to $\Gamma^\ssup{k}$ given by
$$
\Gamma^\ssup{k}(dx)= \frac {(k-1)x}{ k x + \theta^*} \,\Gamma^\ssup{k-1}(dx)=\prod_{l=2}^{k} \frac{(l-1)x}{lx + \theta^*}\, \Gamma^\ssup{1}(dx).
$$

\emph{Initialising the induction.}
To complete the argument, we still need to verify the statement for the initial choice $k=1$.
We again define $X_n = \Gamma_n^\ssup{1}((f,f'])$ for $f,f' \in [0,f]$
with $\mu(\{ f,f'\}) = 0$, $\mu(f,f'] > 0$ and we define $\II_n 
= \{ i \in \{1,\ldots,n\}: \calF_i \in (f,f']\}$. Then, 
it follows that since $\calZ_{n+1}(n+1) = 1$ by definition,
\[\bal \E[ X_{n+1} | \calG_n] & = \frac{1}{n+1} 
  \sum_{i \in \II_n} \p ( Z_{n+1} = 1 | \calG_n) + \frac{1}{n+1}\p ( \calF_{n+1} \in (f,f']) \\
& = X_n + \frac{1}{n+1} \Big[ \sum_{i \in \II_n} \1_{\{ Z_n(i) = 1\}}
\p ( \Delta Z_{n+1} \geq 1 | \calG_n)   - X_n  + \mu((f,f']) \Big] . 
\eal \]
Thus,
in complete
analogy with the induction step, one can show that 
\[ X_{n+1} - X_n \geq \frac{1}{n+1} ( A_n - B_n X_n) + R_{n+1} - R_n , \]
where $A_n \ra \mu((f,f'])$ and $B_n \ra 1 + \frac{f'}{\theta^*}$
and $R_{n+1} - R_n = X_{n+1} - \E [ X_{n+1} | \calG_n]$.
The remainder term $R_n$ can then be decomposed as $M^\ssup{1}_n - M^\ssup{2}_n 
+ I_n$ as above with $M^\ssup{1}_n = 0$, $M^\ssup{2}_n$ defined as in~(\ref{eq:1801-2})
and the additional term defined as
\[ I_n = \frac{1}{n} \big( \1_{ \{ \calF_n \in (f,f'] \}}  - \p ( \calF_n \in (f,f'] )\big) . \]
We have already seen that $M^\ssup{2}$ converges. Moreover, an elementary martingale argument (for i.i.d.\ random variables) shows
that $\sum_{n \in \N} I_n$ also converges almost surely.
Therefore, by Lemma~\ref{le:stApp} we can deduce that 
\[ \liminf_{n \ra\infty} X_n \geq \frac{\theta^*}{\theta^* + f'} \mu((f,f']) . \]
Repeating the same approximation arguments as before, we obtain that $\Gamma^\ssup{1}$
converges almost surely in the weak* topology to $\Gamma^\ssup{1}$ given by
\[ \Gamma^\ssup{1}(dx) = \frac{ \theta^* } {\theta^* + x} \mu( dx) , \]
which completes the proof by induction.
\end{proof}

%%%%%%%%%%%%%%%%%%%%%%%%%%%%%%%%%%%%%%%%%%%%%%%%%%%%%%%%%%%%%%%%%%%%%%%%%%%%%
%
%                 Bibliography 									%
%%%%%%%%%%%%%%%%%%%%%%%%%%%%%%%%%%%%%%%%%%%%%%%%%%%%%%%%%%%%%%%%%%%%%%%%%%%%%

\bibliographystyle{alpha}
\bibliography{PA_fitness}
\end{document}